

\documentclass[11pt,draft,reqno]{amsart}


\usepackage{amssymb} 
\usepackage{dsfont} 
\usepackage{mathrsfs} 
\usepackage{stmaryrd} 
\usepackage{color}


\DeclareMathAlphabet{\mathpzc}{OT1}{pzc}{m}{it} 


\textheight=22.7cm 
\textwidth=16cm 

\oddsidemargin=0cm
\evensidemargin=0cm

\hoffset=0.1cm 
\voffset=-1.5cm 

\linespread{1} 


\newtheorem{Thm}{Theorem}[section]
\newtheorem{Cor}{Corollary}[section]
\newtheorem{Lem}{Lemma}[section]
\newtheorem{Prop}{Proposition}[section]

\newtheorem{Def}{Definition}[section]

\theoremstyle{definition}
\newtheorem{Rem}{Remark}[section]

\theoremstyle{definition}


\makeatletter
\@addtoreset{equation}{section}
\makeatother


\renewcommand\sp{\hspace{2.8ex}} 

\newcommand\void{\varnothing} 
\newcommand\setmeno{\!\smallsetminus\!} 
\newcommand\function{\longrightarrow} 

\newcommand\en{\mathbb{N}} 
\newcommand\ar{\mathbb{R}} 
\newcommand{\eps}{\varepsilon} 

\providecommand{\opint}[1]{\hspace{0.15ex}\left]#1\right[\hspace{0.15ex}} 
\providecommand{\clint}[1]{\hspace{0.045ex}\left[#1\right]} 
\providecommand{\clsxint}[1]{\hspace{0.1ex}\left[#1\right[\hspace{0.15ex}} 
\providecommand{\cldxint}[1]{\hspace{0.15ex}\left]#1\right]} 

\newcommand\X{\textsl{X}\hspace{0.21ex}} 
\newcommand\Y{\textsl{Y}\hspace{0.21ex}} 
\renewcommand\d{\textsl{d}} 
\DeclareMathOperator{\Int}{int} 
\DeclareMathOperator{\Cl}{cl} 

\DeclareMathOperator{\cont}{Cont}  
\DeclareMathOperator{\discont}{Discont}  
\newcommand{\Czero}{{\textsl{C}\hspace{0.18ex}}} 
\DeclareMathOperator{\Lipcost}{Lip} 
\newcommand{\Lip}{{\textsl{Lip}\hspace{0.15ex}}} 
\newcommand{\AC}{{\textsl{AC}\hspace{0.17ex}}} 

\newcommand\E{\textsl{E}\hspace{0.21ex}} 
\newcommand{\convergedeb}{\rightharpoonup} 

\newcommand{\borel}{\mathscr{B}} 
\newcommand\leb{\mathpzc{L}} 
\renewcommand{\L}{{\textsl{L}\hspace{0.17ex}}} 

\DeclareMathOperator{\pV}{V} 
\DeclareMathOperator{\V}{V} 
\newcommand{\BV}{{\textsl{BV}\hspace{0.17ex}}} 
\renewcommand\l{\textsl{l}} 
\renewcommand\r{\textsl{r}} 

\renewcommand\H{\mathcal{H}} 
\newcommand\duality[2]{\langle #1,#2 \rangle} 
\newcommand\lduality[2]{\left\langle #1,#2 \right\rangle} 
\newcommand\bduality[2]{\big\langle #1,#2 \big\rangle} 
\newcommand\norm[2]{\Vert #1\Vert_{#2}} 
\newcommand{\Conv}{\mathscr{C}} 
\DeclareMathOperator{\Proj}{Proj} 
\newcommand\K{\mathcal{K}} 
\newcommand\A{\mathcal{A}} 
\newcommand\B{\mathcal{B}} 
\newcommand\Z{\mathcal{Z}} 
\newcommand\C{\mathcal{C}} 
\newcommand\G{\mathcal{G}} 
\newcommand{\Ctilde}{\widetilde{\mathcal{C}}} 

\newcommand\hausd{\mathpzc{H}} 

\newcommand\vartot[1]{\!\left\bracevert\! #1 \!\right\bracevert\!} 
\newcommand\indicator{\mathds{1}}
\DeclareMathOperator{\de}{d \! \hspace{0.2ex}} 
\newcommand{\Step}{{\textsl{St}\hspace{0.17ex}}} 
\DeclareMathOperator{\D}{D\!} 

\newcommand{\W}{{\textsl{W}\hspace{0.17ex}}} 

\renewcommand{\P}{{\mathsf{P}}} 
\newcommand{\Stop}{{\mathsf{S}}} 
\renewcommand{\P}{{\mathsf{P}}} 
\newcommand{\Q}{{\mathsf{Q}}} 
\newcommand{\M}{{\mathsf{M}}} 

\newcommand{\utilde}{\widetilde{u}} 
\newcommand\yhat{\hat{y}} 
\newcommand{\ftilde}{\widetilde{f}} 
\newcommand\xhat{\hat{x}} 
\newcommand\what{\hat{w}} 


\definecolor{blu1}{rgb}{0.1,0.1,1}

\definecolor{verde}{rgb}{0.1,0.4,0.2}

\definecolor{pink}{rgb}{1.0, 0.33, 0.64}


\begin{document}


\title[Sweeping processes]{$\BV$-norm continuity of sweeping processes \\ driven by a set with constant shape}

\author{Jana Kopfov\'a and  Vincenzo Recupero}
\thanks{Supported by GA\v CR Grant  15-12227S  and by the institutional support
for the development of research organizations I\v C 47813059. 
V. Recupero is partially supported by the INdAM - GNAMPA Project 2016 
``Sweeping processes: teoria, controllo e applicazioni a problemi rate independent''.
V. Recupero thanks the kind hospitality of the Silesian University in Opava where part of this work has been written.
The authors would like to thank Pavel Krej\v c\'{i} for useful discussions about the subject.}

\address{\textbf{Jana Kopfov\'a}\\
Mathematical Institute of the Silesian University\\ 
Na Rybn\' i\v cku 1, CZ-74601 Opava\\ Czech Republic.
    \newline
        {\rm E-mail address:}
        {\tt Jana.Kopfova@math.slu.cz}}

\address{\textbf{Vincenzo Recupero}\\
        Dipartimento di Scienze Matematiche\\ 
        Politecnico di Torino\\
        Corso Duca degli Abruzzi 24\\ 
        I-10129 Torino\\ 
        Italy. \newline
        {\rm E-mail address:}
        {\tt vincenzo.recupero@polito.it}}

\subjclass[2010]{34G25, 34A60, 47J20, 74C05}
\keywords{Sweeping processes, Evolution variational inequalities, Play operator, Convex sets, Hausdorff distance, Functions of bounded variation, Vector measures}



\begin{abstract}
We prove the $\BV$-norm well posedness of \emph{sweeping processes} driven by a moving convex set with constant shape, namely the  $\BV$-norm continuity of  the so called \emph{play operator} of elasto-plasticity.
\end{abstract}


\maketitle


\thispagestyle{empty}


\section{Introduction}

Mathematical models of material memory are often based on the following evolution variational inequality (cf. \cite{DuvLio76, NecHla81}). Let $\H$ be a real Hilbert space with inner product $\duality{\cdot}{\cdot}$ and 
$\Z \subseteq \H$ be a closed convex subset. Given $T > 0$ and $u : \clint{0,T} \function \H$, find 
$y : \clint{0,T} \function \H$ such that
\begin{alignat}{3}
  & \duality{z - u(t) + y(t)}{y'(t)} \le 0  & \qquad & \forall z \in \Z, \quad \text{for $\leb^1$-a.e. $t \in \clint{0,T}$}, 
     \label{var in-intro} \\
  & u(t) - y(t) \in \Z & \qquad & \forall t \in \clint{0,T}, \label{constraint}
\end{alignat}
with a given initial condition
\begin{equation}\label{initial cond-intro}
  u(0) - y(0) = z_{0} \in \Z,
\end{equation}
where $y'$ denotes the time derivative of $y$ and $\leb^1$ is the one dimensional Lebesgue measure. Variational 
inequalities of the form \eqref{var in-intro}--\eqref{initial cond-intro} play an important role in elasto-plasticity, 
ferromagnetism, and phase transitions. It is worth noting that in the new unknown function $x := u - y$, inequality 
\eqref{var in-intro} can be equivalently formulated as the first order differential inclusion
\begin{equation}
   x'(t) + \partial I_\Z(x(t)) \ni u'(t) \qquad \text{for $\leb^1$-a.e. $t \in \clint{0,T}$}, \label{gradient flow for x - intro} 
\end{equation}
$\partial I_\Z$ being the subdifferential of the indicator function $I_\Z$:  $I_{\Z}(x) := 0$ if $x \in \Z$, $I_{\Z}(x) := \infty$ otherwise (precise definitions and formulations will be given in Sections \ref{S:Preliminaries} and 
\ref{S:state main result}). Problem \eqref{gradient flow for x - intro}--\eqref{initial cond-intro} can be solved by using classical tools from the theory of evolution equations governed by maximal monotone operators (cf. \cite{Bre73}). In particular it turns out that for every $u \in \W^{1,1}(0,T;\H)$, the space of $\H$-valued absolutely continuous maps, there exists a unique $y \in \W^{1,1}(0,T;\H)$ satisfying \eqref{var in-intro}--\eqref{initial cond-intro} almost everywhere. The resulting solution operator $\P : \W^{1,1}(0,T;\H) \function \W^{1,1}(0,T;\H) : u \longmapsto y$ is also called \emph{(vector) play operator} following \cite[p. 6, p. 151]{KraPok89} (see also \cite[p. 294]{Mor74}). The suggestive terms \emph{input} and \emph{output} are used for $u$ and $y$ respectively. On the other hand inequality 
\eqref{var in-intro} can also be interpreted as the time dependent gradient flow
\begin{equation}
  y'(t) + \partial I_{u(t) - \Z}(y(t)) \ni 0 \qquad \text{for $\leb^1$-a.e. $t \in \clint{0,T}$}. \label{play-sweep incl intro}
\end{equation}
This is a particular case of \emph{sweeping process}, which can be described as follows. Let us denote by $\Conv_\H$ the family of nonempty convex closed subsets of $\H$ and endow it with the Hausdorff metric: given $y_0 \in \H$ and 
$\C \in \AC(\clint{0,T};\Conv_\H)$, the space of $\Conv_\H$-valued absolutely continuous maps, find a function 
$y \in \W^{1,1}(0,T;\H)$ such that
\begin{alignat}{3}
  & y(t) \in \C(t) & \qquad & \forall t \in \clint{0,T}, \label{sweep constr intro} \\
  & y'(t) + \partial I_{\C(t)}(y(t)) \ni 0 & \qquad & \text{for $\leb^1$-a.e. $t \in \clint{0,T}$}, \label{sweep incl intro} \\
  & y(0) = \Proj_{\C(0)}(y_0), \label{sweep in. cond. intro}
\end{alignat}
where $\Proj_{\K}$ denotes the projection operator on a closed convex set $\K$. This problem was studied and solved in \cite{Mor71, Mor72, Mor74}. More generally in \cite{Mor77} the important case when 
$\C \in \BV^{\r}(\clint{0,T};\Conv_\H)$, the space of right continuous maps with bounded variation, is considered. In this case the formulation has to be generalized and one has to find $y \in \BV^{\r}(\clint{0,T};\H)$, a right continuous
$\H$-valued function  of bounded variation, such that there exists a positive measure $\mu$ and a $\mu$-integrable function $v : \clint{0,T} \function \H$ satisfying
\begin{alignat}{3}
  & y(t) \in \C(t) & \qquad &  \forall t \in \clint{0,T}, \label{y in C - BV - intro} \\
  & \! \D y = v \mu, & \label{Dy = w mu - intro} \\
  & v(t) + \partial I_{\C(t)}(y(t)) \ni 0 & \qquad & \text{for $\mu$-a.e. $t \in \clint{0,T}$}, \label{diff. incl. - BV - intro} \\
  & y(0) = \Proj_{\C(0)}(y_0), &  \label{in. cond. - BV - intro}
\end{alignat}
where $\D y$ denotes the distributional derivative of $y$. This in particular defines the solution operator 
$\M: \BV^{\r}(\clint{0,T};\Conv_\H) \function \BV^{\r}(\clint{0,T};\H)$ associating with $\C \in \BV^{\r}(\clint{0,T};\Conv_\H)$ the solution $y$ of \eqref{y in C - BV - intro}--\eqref{in. cond. - BV - intro}. Moreover if $\C$ is continuous then $\M(\C)$ is also continuous, and if $\C \in \AC(\clint{0,T};\Conv_\H)$ then $y = \M(\C) \in \W^{1,1}(0,T;\H)$ and $y$ satisfies 
\eqref{sweep constr intro}--\eqref{sweep in. cond. intro}. Usually one says that \eqref{diff. incl. - BV - intro} is the sweeping process driven by the moving convex set $\C$. For the theory of sweeping processes and some of their applications we also refer, e.g., to \cite{Mon93, ColGon99, Ben00, Mor02, ColMon03, Thi03, EdmThi06, BerVen10, BerVen11, BerVen15, DimMauSan16, Thi16} and their references.

A relevant feature of sweeping processes is their good behavior with respect to the change of time variable (cf. \cite[Proposition 2i]{Mor77}): if $\M$ is the solution operator of the sweeping process, associating with $\C$ the solution $y$ of \eqref{y in C - BV - intro}-\eqref{in. cond. - BV - intro}, then we have 
\begin{equation}\label{rate ind}
  \M(\C \circ \gamma) = \M(\C) \circ \gamma
\end{equation}
for every continuous increasing time-reparametrization $\gamma$. This property is also called 
\emph{rate independence}. For the theory of rate independent operators and systems we refer, e.g., to
\cite{KraPok89, BroSpr96, Kre97, Vis94, Mie05, MieRou15}. If 
$\C \in \BV(\clint{0,T};\Conv_\H) \cap \Czero(\clint{0,T};\Conv_\H)$, a natural reparametrization of time is given by the (normalized) arc length $\ell_\C : \clint{0,T} \function \clint{0,T}$ defined by
\[
  \ell_\C(t) := \frac{T}{\pV(\C,\clint{0,T})}\pV(\C, \clint{0,t}), \qquad t \in \clint{0,T},
\]
where $\pV(\C, \clint{0,t})$ denotes the variation of $\C$ over $\clint{0,t}$. Therefore there exists a Lipschitz continuous mapping $\Ctilde$ such that $\C = \Ctilde \circ \ell_\C$, thus the rate independence property yields 
\begin{equation}
  \M(\C) = \M(\Ctilde) \circ \ell_\C,
\end{equation} 
and $\M(\Ctilde) \in \Lip(\clint{0,T};\H)$, the space of Lipschitz functions. This reparametrization method was used by Moreau in \cite{Mor71, Mor74} in order to reduce  the sweeping process driven by an absolutely continuous moving set 
$\C(t)$ to the Lipschitz continuous case, while the reduction from 
$\BV(\clint{0,T};\Conv_\H) \cap \Czero(\clint{0,T};\Conv_\H)$ to $\Lip(\clint{0,T};\Conv_\H)$ is performed in 
\cite{Rec11c, Rec15b}.

Let us observe that if $u \in \W^{1,1}(\clint{0,T};\H)$ and $\C_u \in \AC^\r(\clint{0,T};\Conv_\H)$ is defined by 
$\C_u(t) := u(t) - \Z$, then we have $\P(u) = \M(\C_u)$. This remark naturally leads to the definition of the $\BV$-play operator $\P : \BV^{\r}(\clint{0,T};\H) \function \BV^{\r}(\clint{0,T};\H)$: $\P(u) := \M(\C_u)$ for any 
$u \in \BV^{\r}(\clint{0,T};\H)$. We can say that 
\emph{the play operator is a sweeping process driven by a moving convex set with constant shape}. There are other ways to define the play operator for $\BV$ inputs: we will provide a proof that $\P$ admits an integral variational formulation, to be more precise $y = \P(u)$ is the unique function such that \eqref{constraint} and \eqref{initial cond-intro} hold together with
\begin{equation}\label{play BV-integral inequality}
  \int_{\clint{0,T}} \duality{z(t) - u(t) + y(t)}{\de \D y(t)} \le 0 \qquad \forall z \in \BV^{\r}(\clint{0,T};\Z),
\end{equation}
where the integral is computed with respect to the Lebesgue-Stieltjes measure $\D y$. An analogous formulation is given in \cite{KreLau02} where the Young integral is used. Of course the play operator enjoys of the rate independence property which reads $\P(u \circ \gamma) = \P(u) \circ \gamma$ for every $u \in \BV^{\r}(\clint{0,T};\H)$ and every continuous increasing reparametrization $\gamma$ of time. In particular if 
$u \in \BV(\clint{0,T};\H) \cap \Czero(\clint{0,T};\H)$ and $\ell_u(t) := T\pV(u,\clint{0,t})/\pV(u,\clint{0,T})$, 
$t \in \clint{0,T}$, we have 
\begin{equation}\label{rate ind for P}
  \P(u) = \P(\utilde) \circ \ell_u,
\end{equation}
where $\utilde \in \Lip(\clint{0,T};\H)$ is such that $u = \utilde \circ \ell_u$.

The well-posedness of problem \eqref{var in-intro}--\eqref{initial cond-intro}, i.e. the continuity of the operator $\P$ with respect to various topologies, is a fundamental issue both from a theoretical and applicative point of view. The 
behavior of $\P : \BV^{\r}(\clint{0,T};\H) \function \BV^{\r}(\clint{0,T};\H)$ with respect to the topology of uniform convergence can be deduced, e.g.,  from the general results in \cite{Mor77} (cf. Thereom \ref{thm on P} below). The continuity of $\P$ with respect to the $\BV$ strict topology restricted to $\BV(\clint{0,T};\H) \cap \Czero(\clint{0,T};\H)$ was proved in \cite[Proposition 4.11, p. 46]{Kre97} if the boundary of $\Z$ satisfies suitable regularity conditions, and in \cite[Theorem 3.7]{Rec11a} for arbitrary $\Z$. In general  $\P$ is not $\BV$-strict continuous on the whole 
$\BV^{\r}(\clint{0,T};\H)$, it was proved in \cite{Rec11a} that the continuity in the strict topology holds if and only if 
$\Z = \{x \in \H\ :\ -\alpha \le \duality{f}{x} \le \beta\}$ for some $f \in \H \setmeno \{0\}$ and 
$\alpha, \beta \in \clint{0,\infty}$. In the one dimensional case it turns out $\P$ is always $\BV$-strict continuous on 
$\BV^{\r}(\clint{0,T};\ar)$ (see also \cite{Vis94, BroSpr96, Rec07, Rec09}).

In this paper we address the problem  of the continuity of $\P$ with respect to the classical $\BV$-norm topology induced
by the norm $\norm{u}{\BV} := \norm{u}{\infty} + \V(u,\clint{0,T})$, $u \in \BV^{\r}(\clint{0,T};\H)$. For absolutely continuous inputs the $\BV$-topology is exactly the standard $\W^{1,1}$-topology, and the continuity of the restriction of $\P$ to $\W^{1,1}(0,T;\H)$ was proved in \cite{Kre91} for finite dimensional $\H$ and in \cite{Kre97} for separable Hilbert spaces. For such spaces $\H$, the continuity of $\P$ in $\BV^{\r}(\clint{0,T};\H)$ (and in 
$\BV(\clint{0,T};\H) \cap \Czero(\clint{0,T};\H)$) is known only when $\Z$ has a smooth boundary (cf. 
\cite{BroKreSch04, KreRoc11}), in this case $\P$ is even locally Lipschitz continuous. Anyway this regularity assumption turns out be restrictive in many applications.

In the present paper we prove that $\P : \BV^{\r}(\clint{0,T};\H) \function \BV^{\r}(\clint{0,T};\H)$ is continuous with respect to the $\BV$-norm topology for \emph{every} arbitrary nonempty closed convex set $\Z$ (and with no separability 
assumptions on $\H$).

In order to describe what kind of difficulties arise in proving the general $\BV$-norm continuity of $\P$, let us briefly 
examine the known proofs in the more regular cases. 

If the input $u$ belongs to $\W^{1,1}(0,T;\H)$ and $x(t)$ solves \eqref{gradient flow for x - intro}, then $y'(t) = (\P(u))'(t)$ is a normal vector and $x'(t)$ is a tangential vector to $\Z$ at $x(t)$ in the sense of convex analysis. The proof given in \cite{Kre91} is strongly based on the orthogonal decomposition $u'(t) = x'(t) + y'(t)$. In the general $\BV$ case the distributional and the pointwise derivatives are different, so this procedure does not work.

If the input $u$ is an arbitrary $\BV$ function, but $\Z$ is smooth, then the proof provided in \cite{KreRoc11} relies upon an explicit formula for the (unique) unit normal vector to the boundary of $\Z$. If $\Z$ is not smooth there can be several 
unit normal vectors at a boundary point and this argument cannot be used.

These considerations, together with the rate independence property, suggest to try to use formula \eqref{rate ind for P}, at least for the continuous case, and somehow ``reduce'' the problem to the Lipschitz continuous case: indeed if 
$u \in \BV(\clint{0,T};\H) \cap \Czero(\clint{0,T};\H)$ then $\P(u) = \P(\utilde) \circ \ell_{u}$ and 
$\utilde, \P(\utilde) \in \Lip(\clint{0,T};\H)$, therefore one can try to get information on the $\BV$-norm continuity of 
$\P(u)$ by using the above orthogonal decomposition for the arc length reparametrization $\utilde$.

We are going to show that this procedure is actually possible, thus we are left with the discontinuous case and one can try to extend the previous reparametrization procedure. If $u \in \BV^{\r}(\clint{0,T};\H)$, then the reparametrization 
$\utilde$ is a Lipschitz function defined on the image $\ell_u(\clint{0,T})$, therefore we need to extend the definition of 
$\utilde$ to the whole $\clint{0,T}$, in other words we have to fill in the jumps of $u$. It is very natural to use segments, i.e. to define the Lipschitz continuous function $\utilde : \clint{0,T} \function \H$ to be affine on every interval
$\clint{\ell_u(t-),\ell_u(t)}$ and of course we still have $u = \utilde \circ \ell_u$. The length function $\ell_u$ is not continuous anymore, so rate independence does not apply, but anyway one may be tempted to use the formula 
$\P(\utilde) \circ \ell_u$. The issue here is that $\P(\utilde) \circ \ell_u \neq \P(u)$, as shown in \cite{Rec11a} (see \cite{KreRec14a,KreRec14b} for a detailed comparison of these two operators). We overcome this problem by considering the more general framework of sweeping processes: we consider the driving moving set $\C_u(t) = u(t) - \Z$ and we fill in the jumps of $\C_u$, (i.e. of $u$) with a suitable $\Conv_\H$-valued function, indeed using ``segments'' 
$(1-t)\A + t\B$ would produce the ``wrong'' output $\P(\utilde) \circ \ell_u$. The proper choice is connecting two sets $\A$ and $\B$ by geodesics of the form $\C(t) := (\A + D_{t\rho}) \cap (\B + D_{(1-t)\rho})$,  where $\rho$ is the Hausdorff distance between $\A$ and $\B$, and $D_r$ is the closed ball with center $0$ and radius $r$. Indeed in the paper \cite{Rec16} it is proved that if $\C \in \BV(\clint{0,T};\Conv_\H)$ and if $\Ctilde \in \Lip(\clint{0,T};\H)$ is the unique function such that $\C = \Ctilde \circ \ell_\C$ and 
\begin{equation}\label{convex geodesic}
  \Ctilde(\ell_\C(t-)(1-\lambda) + \ell_\C(t)\lambda)  =  
  (\C(t-) + D_{\lambda \rho_t}) \cap (\C(t) + D_{(1-\lambda)\rho_t}),
\end{equation}
for $t \in \clint{0,T}$, $\lambda \in \clint{0,1}$, with $\rho_t := \d_\hausd(\C(t-),\C(t))$, then $\M(\Ctilde) \circ \ell_\C$ is 
actually the solution of the sweeping process driven by $\C$, i.e. the formula $\M(\C) = \M(\Ctilde) \circ \ell_\C$ holds. 

In our particular situation if $u \in \BV^{\r}(\clint{0,T};\H)$ it follows that $\M(\Ctilde_{u}) \in \Lip(\clint{0,T};\H)$ is the play operator $\P(\utilde) \in \Lip(\clint{0,T};\H)$ on the set $\ell_u(\clint{0,T})$, where the pointwise derivative can be exploited, while outside of $\ell_u(\clint{0,T})$, on the ``jump set'', we can analyze $\P(u)$ by means of formula \eqref{convex geodesic}.  As a consequence, if $u_n \to u$ in $\BV^{\r}(\clint{0,T};\H)$ then the behaviour of the variation of $\P(u_n) = \M(\C_{u_n})$ can be studied with the help of the formula 
$\P(u_n) = \M(\C_{u_n}) = \M(\Ctilde_{u_n}) \circ \ell_{\C_{u_n}}$ and the $\BV$-norm continuity can be eventually proved by using tools from vector measure theory.

The paper is organized as follows. In the next section we present some preliminaries and in Section 
\ref{S:state main result} we state our main continuity result. The reparametrization technique for convex-valued functions is adapted to our situation in Section \ref{S:reparametrization} and it is exploited in Section \ref{S:integral representation}
to prove the integral representation for $\P$. In Section \ref{S:reduction} we reduce our problem to a Lipschitz continuous sweeping process. All the results of these sections are used in Section \ref{S:proof main thm} to prove the main theorem. 


\section{Preliminaries}\label{S:Preliminaries}

In this section we recall the main definitions and tools needed in the paper. {}{We denote by  $\en$  the set of natural numbers (without $0$).} If $\textsl{E}$ is a Banach space and $x, x_n \in E$ for every $n \in \en$, then the symbol 
$x_n \convergedeb x$ indicates that $x_n$ is weakly convergent to $x$ (cf., e.g., \cite{Bre83}). Given a subset $S$ of the real line $\ar$, if $\borel(S)$ denotes the family of Borel sets in $S$, $\mu : \borel(S) \function \clint{0,\infty}$ is a measure, $p \in \clint{1,\infty}$, then the space of $\E$-valued functions which are $p$-integrable with respect to $\mu$ will be denoted by $\L^p(S, \mu; \E)$ or simply by $\L^p(\mu; \E)$. We do not identify two functions which are equal 
$\mu$-almost everywhere ($\mu$-a.e.). For the theory of integration of vector valued functions we refer, e.g., to \cite[Chapter VI]{Lan93}. When $\mu = \leb^1$, the one dimensional Lebesgue measure, we write 
$\L^p(S; \E) := \L^p(S,\mu; \E)$. 


\subsection{Functions with values in a metric space}

In this subsection we assume that 
\begin{equation}\label{X complete metric space}
  \text{$(\X,\d)$ is a complete metric space},
\end{equation}
where we admit that $\d$ is an extended metric, i.e. $\X$ is a set and $\d : \X \times \X \function \clint{0,\infty}$ satisfies the usual axioms of a distance, but may take on the value $\infty$. The notion of completeness remains unchanged and the topology induced by $\d$ is defined in the usual way by means of the open balls 
$B_r(x_0) := \{x \in \X\ :\ \d(x,x_0) < r\}$ for $r > 0$ and $x_0 \in \X$, so that it satisfies the first axiom of countability. The general topological notions of interior, closure and boundary of a subset $\Y \subseteq \X$ will be respectively denoted by $\Int(\Y)$, $\Cl(\Y)$ and $\partial \Y$. If $x \in \X$ and $\Y \subseteq \X$, we also set 
$\d(x,\Y) := \inf_{y \in \Y} \d(x,y)$.
 
If $I \subseteq \ar$ is an interval and $f \in \X^I$ (the space of $\X$-valued functions defined on $I$), then $\cont(f)$
denotes the continuity set of $f$, and $\discont(f) := I \setmeno \cont(f)$. The set of $\X$-valued continuous functions defined on $I$ is denoted by $\Czero(I;\X)$. For $S \subseteq I$ we write 
$\Lipcost(f,S) := \sup\{\d(f(s),f(t))/|t-s|\ :\ s, t \in S,\ s \neq t\}$, $\Lipcost(f) := \Lipcost(f,I)$, the Lipschitz constant of $f$, 
and $\Lip(I;\X) := \{f \in \X^I\ :\ \Lipcost(f) < \infty\}$, the set of $\X$-valued Lipschitz continuous functions on $I$. We recall now the notion of $\BV$ function with values in a metric space (see, e.g., \cite{Amb90, Zie94}).

\begin{Def}
Given an interval $I \subseteq \ar$, a function $f \in \X^I$, and a subinterval $J \subseteq I$, the 
\emph{(pointwise) variation of $f$ on $J$} is defined by
\begin{equation}\notag
  \pV(f,J) := 
  \sup\left\{
           \sum_{j=1}^{m} \d(f(t_{j-1}),f(t_{j}))\ :\ m \in \en,\ t_{j} \in J\ \forall j,\ t_{0} < \cdots < t_{m} 
         \right\}.
\end{equation}
If $\pV(f,I) < \infty$ we say that \emph{$f$ is of bounded variation on $I$} and we set 
$\BV(I;\X) := \{f \in \X^I\ :\ \pV(f,I) < \infty\}$.
\end{Def}

It is well known that the completeness of $\X$ implies that every $f \in \BV(I;\X)$ admits one sided limits $f(t-), f(t+)$ 
at every point $t \in I$, with the convention that $f(\inf I-) := f(\inf I)$ if $\inf I \in I$, and that $f(\sup I+) := f(\sup I)$ if 
$\sup I \in I$. We set
\begin{align}
  & \BV^\l(I;\X) := \{f \in \BV(I;\X)\ :\ f(t-) = f(t) \quad \forall t \in I\}, \notag \\
  & \BV^\r(I;\X) := \{f \in \BV(I;\X)\ :\ f(t) = f(t+) \quad \forall t \in I\}. \notag
\end{align}
If $I$ is bounded we have $\Lip(I;\X) \subseteq \BV(I;\X)$.
\begin{Def}\label{D:def AC curve ags}
Assume that $p \in \clint{1,\infty}$. A mapping $f : I \function \X$ is called \emph{$\AC^{p}$-absolutely continuous} if there exists $m \in \L^{p}(I;\ar)$ such that
\begin{equation}
  \d(f(s),f(t)) \le \int_{s}^{t} m(\sigma) \de \sigma
  \qquad \forall s,t \in \clint{0,T},\ s \le t.
\end{equation}
The set of $\AC^{p}$-absolutely continuous functions is denoted by $\AC^{p}(I;\X)$.
\end{Def}

Clearly $\AC^{\infty}(I;\X) = \Lip(I;\X)$. If $I$ is bounded then $\AC^{p}(I;\X) \subseteq \BV(I;\X) \cap \Czero(I;\X)$ for every $p \in \clsxint{1,\infty}$, and $f \in \AC^1(I;\X)$ if and only if for every $\eps > 0$ there exists $\delta > 0$ such that $\sum_{j=1}^m \d(f(s_k),f(t_k)) < \eps$ whenever $m \in \en$ and $(\opint{s_k,t_k})_{k=1}^m$ is a family of mutually disjoint intervals with $\sum_{j=1}^m |t_k -s_k| < \delta$. In the next definition we recall two natural metrics in 
$\BV(I;\X)$.
\begin{Def}\label{metrics in BV}
For every $f, g \in \BV(I;\X)$ we set
\begin{align}
  & \d_{\infty}(f,g) := \sup_{t \in I} \d(f(t),g(t)), \\
  & \d_{us}(f,g) := \d_{\infty}(f,g) + |\pV(f,I) - \pV(g,I)|.
\end{align}
The metric $\d_{\infty}$ and $\d_{us}$ are called respectively \emph{uniform metric on $\BV(I;\X)$} and 
\emph{uniform strict metric on $\BV(I;\X)$}. We say that \emph{$u_n \to u$ uniformly strictly on $I$} if 
$\d_{us}(u_n,u) \to 0$ as $n \to \infty$. Let us recall that $\d_{us}$ is not complete and the topology induced by $\d_{us}$ is not linear if $\X$ is a Banach space.
\end{Def}

Now we recall the notion of geodesic.

\begin{Def}
Assume that $x,y \in \X$ and $\d(x,y) < \infty$. A function $g \in \Lip(\clint{0,1};\X)$ is called 
\emph{a geodesic connecting $x$ to $y$} if $g(0) = x$, $g(1) = y$, and $\pV(g,\clint{0,1}) = \d(x,y)$. 
\end{Def}


\subsection{Some convex analysis}

Let us assume that
\begin{equation}\label{H-prel}
\begin{cases}
  \text{$\H$ is a real Hilbert space with inner product $(x,y) \longmapsto \duality{x}{y}$,} \\
  \norm{x}{} := \duality{x}{x}^{1/2},
\end{cases}
\end{equation}
and we endow $\H$ with the natural metric defined by $\d(x,y) := \norm{x-y}{}$, $x, y \in \H$. We also use the notation 
\begin{equation}\notag
  D_r := \{x \in \H\ :\ \norm{x}{} \le r\}, \qquad r \ge 0,
\end{equation}
and we set
\begin{equation}\notag
  \Conv_\H := \{\K \subseteq \H\ :\ \K \ \text{nonempty, closed and convex} \}.
\end{equation}
If  $\K \in \Conv_\H$ and $x \in \H$, then $\Proj_{\K}(x)$ {}{denotes} the projection on $\K$, i.e. $y = \Proj_\K(x)$ is the unique point such that $d(x,\K) = \norm{x-y}{}$, {}{or equivalently   $y \in \K$ and $y$ satisfies the variational inequality
\begin{equation}\notag
  \duality{x - y}{v - y} \le 0 \qquad \forall v \in \K.
\end{equation}}
If $\K \in \Conv_\H$ and $x \in \K$, then $N_\K(x)$ denotes the \emph{(exterior) normal cone of $\K$ at $x$}:
\begin{equation}\label{normal cone}
  N_\K(x) := \{u \in \H\ :\ \duality{u}{v - x} \le 0\ \forall v \in \K\} = \Proj_\K^{-1}(x) - x.
\end{equation}
We endow the set $\Conv_{\H}$ with the Hausdorff distance. Here is the definition.

\begin{Def}
The \emph{Hausdorff distance} $\d_\hausd : \Conv_{\H} \times \Conv_{\H} \function \clint{0,\infty}$ is defined by
\begin{equation}\label{hausd dist}
  \d_{\hausd}(\A,\B) := 
  \max\left\{\sup_{a \in \A} \d(a,\B), \sup_{b \in \B} \d(b,\A)\right\}, \qquad
  \A, \B \in \Conv_{\H}.
\end{equation}
\end{Def}

Now we recall the notion of subdifferential (cf. \cite[Chapter 2]{Bre73}). If $\Psi : \H \function \clint{0,\infty}$ is convex 
and lower semicontinuous and $D(\Psi) := \{x \in \H\ :\ \Psi(x) \neq \infty\} \neq \void$, then for $x \in \H$ the \emph{subdifferential of $\phi$ at $x$} is defined by 
$\partial \Psi(x) := \{y \in \H\ :\ \duality{y}{v - x} + \Psi(x) \le \Psi(v)\ \forall v \in \H\}$. The \emph{domain} of $\partial \Psi$ 
is defined by $D(\partial \Psi) := \{x \in \H\ :\ \partial \Psi(x) \neq \void\}$. If $\mathcal{K} \in \Conv_{\H}$ and
$I_{\mathcal{K}}$, the \emph{indicator function of $\mathcal{K}$}, is defined by $I_{\mathcal{K}}(x) = 0$ if 
$x \in \mathcal{K}$ and $I_{\mathcal{K}}(x) = \infty$ if $x \not\in \mathcal{K}$, then 
$\partial I_{\mathcal{K}}(x) = N_{\mathcal{K}}(x)$ for every 
$x \in D(I_{\mathcal{K}}) = D(\partial I_{\mathcal{K}}) = \mathcal{K}$.


\subsection{Differential measures}

Let $\E$ be a Banach space with 
norm $\norm{\cdot}{\E}$ and let 
$I \subseteq \ar$ be 
an interval.
We recall that a \emph{(Borel) vector measure} on $I$ is a map $\mu : \borel(I) \function \E$ such that 
$\mu(\bigcup_{n=1}^{\infty} B_{n})$ $=$ $\sum_{n = 1}^{\infty} \mu(B_{n})$ whenever $(B_{n})$ is a sequence of mutually disjoint sets in $\borel(I)$.
The \emph{total variation of $\mu$} is the positive 
measure 
$\vartot{\mu} : \borel(I) \function \clint{0,\infty}$ defined by
\begin{align}\label{tot var measure}
  \vartot{\mu}(B)
  := \sup\left\{\sum_{n = 1}^{\infty} \norm{\mu(B_{n})}{\E}\ :\ 
                 B = \bigcup_{n=1}^{\infty} B_{n},\ B_{n} \in \borel(I),\ 
                 B_{h} \cap B_{k} = \varnothing \text{ if } h \neq k\right\}. \notag
\end{align}
The vector measure $\mu$ is said to be \emph{with bounded variation} if $\vartot{\mu}(I) < \infty$. In this case the equality $\norm{\mu}{} := \vartot{\mu}(I)$ defines a norm on the space of measures with bounded variation (see, e.g. \cite[Chapter I, Section  3]{Din67}). 

If $\nu : \borel(I) \function \clint{0,\infty}$ is a positive bounded Borel measure and if $g \in \L^1(I,\nu;\E)$, then $g\nu$ will denote the vector measure defined by $g\nu(B) := \int_B g\de \nu$ for every $B \in \borel(I)$. In this case 
$\vartot{g\nu}(B) = \int_B \norm{g(t)}{E}\de \nu$ for every $B \in \borel(I)$ (see \cite[Proposition 10, p. 174]{Din67}).

Let $\E_{j}$, $j = 1, 2, 3$, be Banach spaces with norms $\norm{\cdot}{\E_{j}}$ and let 
$\E_{1} \times \E_{2} \function \E_{3} : (x_{1},x_{2}) \longmapsto x_{1} \bullet x_{2}$ be a bilinear form such that 
$\norm{x_{1} \bullet x_{2}}{\E_{3}} \le \norm{x_{1}}{\E_{1}} \norm{x_{2}}{\E_{2}}$ for every $x_{j} \in \E_{j}$, $j=1, 2$. Assume that $\mu : \borel(I) \function \E_{2}$ is a vector measure with bounded variation and let $f : I \function \E_{1}$ be a \emph{step map with respect to $\mu$}, i.e. there exist $f_{1}, \ldots, f_{m} \in \E_{1}$ and 
$A_{1}, \ldots, A_{m} \in \borel(I)$ mutually disjoint such that $\vartot{\mu}(A_{j}) < \infty$ for every $j$ and 
$f = \sum_{j=1}^{m} \indicator_{A_{j}} f_{j},$ where $\indicator_{S} $ is the characteristic function of a set $S$, i.e. 
$\indicator_{S}(x) := 1$ if $x \in S$ and $\indicator_{S}(x) := 0$ if $x \not\in S$. For such $f$ we define 
$\int_{I} f \bullet \de \mu := \sum_{j=1}^{m} f_{j} \bullet \mu(A_{j}) \in \E_{3}$. If $\Step(\vartot{\mu};\E_{1})$ is the set of 
$\E_1$-valued maps with respect to $\mu$, then the map 
$\Step(\vartot{\mu};\E_{1})$ $\function$ $\E_{3} : f \longmapsto \int_{I} f \bullet \de \mu$ is linear and continuous when 
$\Step(\vartot{\mu};\E_{1})$ is endowed with the $\L^{1}$-semimetric 
$\norm{f - g}{\L^{1}(\vartot{\mu};\E_{1})} := \int_I \norm{f - g}{\E_{1}} \de \vartot{\mu}$. Therefore it admits a unique continuous extension $\mathsf{I}_{\mu} : \L^{1}(\vartot{\mu};\E_{1}) \function \E_{3}$ and we set 
\[
  \int_{I} f \bullet \de \mu := \mathsf{I}_{\mu}(f), \qquad f \in \L^{1}(\vartot{\mu};\E_{1}).
\]
We will use the previous integral in two particular cases, namely when 
\begin{itemize}
\item[a)] 
  $\E_{1} = \ar$, $\E_{2} = \E_{3} = \H$, $\lambda \bullet x := \lambda x$ 
  ($\int_{I} f \bullet \de \mu = \int_{I} f \de \mu$, integral of a real function with respect to a vector measure); 
\item[b)]
  $\E_{1} = \E_{2} = \H$, $\E_{3} = \ar$, $x_{1} \bullet x_{2} := \duality{x_{1}}{x_{2}}$ 
  ($\int_{I} f \bullet \de \mu = \int_{I} \duality{f}{\de \mu}$, integral of a vector function with respect to a vector measure).
\end{itemize}
In the situation (b) with $\mu = g\nu$, $\nu$ bounded positive measure and $g \in \L^{1}(\nu;\H)$, arguing first on step functions, and then taking limits, it is easy to check that $\int_I\duality{f}{\de(g\nu)} = \int_I \duality{f}{g}\de \nu$ for every $f \in \L^{\infty}(\mu;\H)$. The following results (cf., e.g., \cite[Section III.17.2-3, pp. 358-362]{Din67}) provide a connection between functions with bounded variation and vector measures. 

\begin{Thm}\label{existence of Stietjes measure}
For every $f \in \BV(I;\H)$ there exists a unique vector measure of bounded variation $\mu_{f} : \borel(I) \function \H$ such that 
\begin{align}
  \mu_{f}(\opint{c,d}) = f(d-) - f(c+), \qquad \mu_{f}(\clint{c,d}) = f(d+) - f(c-), \notag \\ 
  \mu_{f}(\clsxint{c,d}) = f(d-) - f(c-), \qquad \mu_{f}(\cldxint{c,d}) = f(d+) - f(c+). \notag 
\end{align}
whenever $c < d$ and the left hand side of each equality makes sense. 

Vice versa if $\mu : \borel(I) \function \H$ is a vector measure with bounded variation, and if $f_{\mu} : I \function \H$ is defined by $f_{\mu}(t) := \mu(\clsxint{\inf I,t} \cap I)$, then $f_{\mu} \in \BV(I;\H)$ and $\mu_{f_{\mu}} = \mu$.
\end{Thm}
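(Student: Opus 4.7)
The plan is to construct $\mu_{f}$ via a Carath\'eodory-type extension from the semi-algebra of intervals, using as intermediate tool the positive Lebesgue--Stieltjes measure associated with the scalar pointwise variation function of $f$.

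For the first part, I would first define $\mu_{f}$ on the semi-algebra $\mathcal{J}$ of all sub-intervals of $I$ by the four prescribed formulas; their mutual consistency follows from the existence of the one-sided limits of $f$ at every point of $I$ and from a direct telescoping check whenever a $\mathcal{J}$-interval is partitioned into finitely many $\mathcal{J}$-sub-intervals. Introduce then the nondecreasing variation function $V_{f}(t) := \pV(f, I \cap \cldxint{-\infty,t})$, and let $\nu_{f}$ be the positive finite Borel Lebesgue--Stieltjes measure on $I$ associated with it, satisfying $\nu_{f}(I) = \pV(f,I) < \infty$. The pointwise estimate
\begin{equation*}
  \norm{\mu_{f}(A)}{\H} \le \nu_{f}(A), \qquad A \in \mathcal{J},
\end{equation*}
follows directly from the definition of $V_{f}$. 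This domination is what upgrades finite to countable additivity of $\mu_{f}$: if $A = \bigsqcup_{n} A_{n}$ with $A, A_{n} \in \mathcal{J}$, then
\begin{equation*}
  \norm{\mu_{f}(A) - \sum_{n=1}^{N} \mu_{f}(A_{n})}{\H} = \norm{\mu_{f}\left(A \setmeno \bigsqcup_{n=1}^{N} A_{n}\right)}{\H} \le \nu_{f}\left(A \setmeno \bigsqcup_{n=1}^{N} A_{n}\right),
\end{equation*}
and the right-hand side vanishes as $N \to \infty$ by $\sigma$-additivity of $\nu_{f}$. Hence $\mu_{f}$ is countably additive on $\mathcal{J}$, and the Carath\'eodory/Hahn extension theorem produces a unique vector measure on the generated $\sigma$-algebra $\borel(I)$, with the bound $\vartot{\mu_{f}}(I) \le \nu_{f}(I) = \pV(f,I)$ preserved. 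Uniqueness is automatic because $\mathcal{J}$ is a $\pi$-system generating $\borel(I)$.

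For the converse, take any finite subdivision $t_{0} < \cdots < t_{m}$ of $I$ and use the identity $f_{\mu}(t_{j}) - f_{\mu}(t_{j-1}) = \mu(\cldxint{t_{j-1}, t_{j}})$ together with the pairwise disjointness of the intervals $\cldxint{t_{j-1},t_{j}}$ to write
\begin{equation*}
  \sum_{j=1}^{m} \norm{f_{\mu}(t_{j}) - f_{\mu}(t_{j-1})}{\H} \le \sum_{j=1}^{m} \vartot{\mu}(\cldxint{t_{j-1},t_{j}}) \le \vartot{\mu}(I),
\end{equation*}
whence $\pV(f_{\mu},I) \le \vartot{\mu}(I) < \infty$, so $f_{\mu} \in \BV(I;\H)$. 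From the countable additivity of $\mu$ and the definition of $f_{\mu}$ one reads off the one-sided limits $f_{\mu}(c-) = \mu(\clsxint{\inf I, c} \cap I)$ and $f_{\mu}(c+) = \mu(\cldxint{\inf I, c} \cap I)$; the four formulas in the statement then identify $\mu_{f_{\mu}}$ with $\mu$ on the semi-algebra $\mathcal{J}$, and the uniqueness established in the first part gives $\mu_{f_{\mu}} = \mu$ on all of $\borel(I)$.

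The step I expect to be the main obstacle is the passage from finite to countable additivity of $\mu_{f}$: for vector-valued set functions countable additivity does not come for free from finite additivity, and a priori we have no scalar positive control available on $\mu_{f}$. The device that makes the argument work is precisely the domination $\norm{\mu_{f}(A)}{\H} \le \nu_{f}(A)$ by the positive Stieltjes measure of $V_{f}$, which transfers $\sigma$-additivity from $\nu_{f}$ to $\mu_{f}$ via a tail-control argument.
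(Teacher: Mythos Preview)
The paper does not actually prove this theorem: it is stated as a known result and referenced to Dinculeanu's \emph{Vector Measures} \cite[Section III.17.2--3]{Din67}, so there is no in-paper proof to compare against. Your outline is essentially the classical construction one finds in that reference: build the Stieltjes pre-measure on intervals, dominate it by the positive Stieltjes measure of the scalar variation function, use that domination to upgrade finite to countable additivity, and then extend. Your identification of the crucial step---the inequality $\norm{\mu_{f}(A)}{\H} \le \nu_{f}(A)$ as the device that transfers $\sigma$-additivity from $\nu_{f}$ to $\mu_{f}$---is exactly right, and this is precisely what distinguishes the vector-valued case from the scalar one.

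Two small remarks on execution. First, in the countable-additivity step, the set $A \setmeno \bigsqcup_{n=1}^{N} A_{n}$ is in general a finite disjoint union of intervals, not a single element of $\mathcal{J}$; so before writing $\mu_{f}\bigl(A \setmeno \bigsqcup_{n=1}^{N} A_{n}\bigr)$ you should first extend $\mu_{f}$ by finite additivity to the algebra generated by $\mathcal{J}$, which is routine. Second, in the converse part your formula for $f_{\mu}(c+)$ should read $\mu(\clint{\inf I, c} \cap I)$ rather than $\mu(\cldxint{\inf I, c} \cap I)$: the singleton $\{\inf I\}$ may carry mass, and the limit from above of $\mu(\clsxint{\inf I, t} \cap I)$ as $t \searrow c$ is $\mu(\clint{\inf I, c} \cap I)$. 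Neither point affects the soundness of the argument.
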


\begin{Prop}
Let $f  \in \BV(I;\H)$, let $g : I \function \H$ be defined by $g(t) := f(t-)$, for $t \in \Int(I)$, and by $g(t) := f(t)$, if 
$t \in \partial I$, and let $V_{g} : I \function \ar$ be defined by $V_{g}(t) := \V(g, \clint{\inf I,t} \cap I)$. Then  
$\mu_{g} = \mu_{f}$ and $\vartot{\mu_{f}} = \mu_{V_{g}} = \V(g,I)$.
\end{Prop}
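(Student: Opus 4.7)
My plan is to split the proposition into three claims: first that $\mu_g=\mu_f$, second that $V_g$ is left-continuous, and third that $\vartot{\mu_g}=\mu_{V_g}$. The whole argument rests on playing off one-sided limits of $g$ against those of $f$, together with a standard but delicate identification of $\vartot{\mu_g}$ with the pointwise variation of $g$.

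\textbf{Step 1: $\mu_g=\mu_f$.} By construction $g(t-)=f(t-)$ for every $t\in\Int(I)$, because $g$ equals $f^-$ on $\Int(I)$ and $f^-$ is itself left-continuous. Similarly $g(t+)=\lim_{s\downarrow t}f(s-)=f(t+)$ since $f(s-)\to f(t+)$ as $s\downarrow t$. At the endpoints of $I$, $g$ coincides with $f$ by definition, which forces the appropriate one-sided limits to agree with the conventions made after the definition of $\BV(I;\X)$. Thus for every pair $c<d$ in $I$, the four formulas in Theorem \ref{existence of Stietjes measure} yield $\mu_g(J)=\mu_f(J)$ on each of $\opint{c,d}$, $\clint{c,d}$, $\clsxint{c,d}$, $\cldxint{c,d}$. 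Since such intervals generate $\borel(I)$ and $\mu_g$, $\mu_f$ are finite vector measures, the uniqueness statement of Theorem \ref{existence of Stietjes measure} (applied to the function $f_{\mu_g}$) gives $\mu_g=\mu_f$.

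\textbf{Step 2: $V_g$ is left-continuous.} The function $V_g$ is non-decreasing and bounded by $\V(f,I)<\infty$, so $V_g(t-)$ exists and $V_g(t-)\le V_g(t)$. For the reverse inequality, fix $\eps>0$ and pick a partition $\inf I\le t_0<\cdots<t_m=t$ (intersected with $I$) with $\sum_{k=1}^m\norm{g(t_k)-g(t_{k-1})}{}>V_g(t)-\eps$. For $s\in\opint{t_{m-1},t}$, the partition $t_0<\cdots<t_{m-1}<s$ gives $V_g(s)\ge\sum_{k=1}^{m-1}\norm{g(t_k)-g(t_{k-1})}{}+\norm{g(s)-g(t_{m-1})}{}$. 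Left-continuity of $g$ at $t$ allows us to send $s\uparrow t$ and conclude $V_g(t-)\ge V_g(t)-\eps$; since $\eps$ is arbitrary, $V_g(t-)=V_g(t)$.

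\textbf{Step 3: $\vartot{\mu_g}=\mu_{V_g}$.} This is the core technical step. I will show that $\vartot{\mu_g}(\clsxint{c,d})=\V(g,\clint{c,d})$ for every $c<d$ in $I$. For $\ge$, given a partition $c=t_0<\cdots<t_m=d$ the sets $\clsxint{t_{k-1},t_k}$ are disjoint with union $\clsxint{c,d}$, and Theorem \ref{existence of Stietjes measure} combined with left-continuity of $g$ gives $\mu_g(\clsxint{t_{k-1},t_k})=g(t_k-)-g(t_{k-1}-)=g(t_k)-g(t_{k-1})$, whence $\sum_k\norm{\mu_g(\clsxint{t_{k-1},t_k})}{}=\sum_k\norm{g(t_k)-g(t_{k-1})}{}$; taking suprema gives the bound. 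For $\le$, I approximate an arbitrary Borel partition of $\clsxint{c,d}$ by partitions into half-open subintervals $\clsxint{s_{k-1},s_k}$ via outer regularity of $\vartot{\mu_g}$, again using the identification $\mu_g(\clsxint{s_{k-1},s_k})=g(s_k)-g(s_{k-1})$; this converts the variation sum into a Riemann-type sum bounded by $\V(g,\clint{c,d})$. (This is the classical identification of the total variation of a Stieltjes-type vector measure with the pointwise variation of its left-continuous representative.)

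\textbf{Conclusion.} By additivity of pointwise variation, $\V(g,\clint{c,d})=V_g(d)-V_g(c)$, whereas left-continuity of $V_g$ yields $\mu_{V_g}(\clsxint{c,d})=V_g(d-)-V_g(c-)=V_g(d)-V_g(c)$. Hence $\vartot{\mu_f}=\vartot{\mu_g}$ agrees with $\mu_{V_g}$ on half-open intervals, and therefore on all of $\borel(I)$. Evaluating both measures at $I$ gives the last identity of the statement, namely that their common total mass equals $\lim_{t\uparrow\sup I}V_g(t)=\V(g,I)$. The main obstacle is Step 3: the $\le$ direction requires a genuine approximation argument from vector measure theory rather than the elementary algebra used for $\ge$, since arbitrary Borel partitions can be much finer than partitions into intervals.
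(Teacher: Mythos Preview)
The paper does not supply its own proof of this proposition: it is quoted, together with Theorem \ref{existence of Stietjes measure}, from \cite[Section III.17.2-3]{Din67}. So there is no in-paper argument to compare against; your outline is essentially the classical one found in Dinculeanu, and Steps 1 and the $\ge$ half of Step 3 are carried out correctly.

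Two points deserve tightening. First, $g$ is \emph{not} left-continuous at the right endpoint $b=\sup I$ when $b\in I$: there $g(b)=f(b)$ while $g(b-)=f(b-)$, and these may differ. Hence your Step 2 conclusion ``$V_g$ is left-continuous'' fails at $b$; in fact $V_g(b)-V_g(b-)=\norm{f(b)-f(b-)}{}$, which matches $\vartot{\mu_f}(\{b\})$. This is harmless for the proposition but breaks your identification $\mu_g(\clsxint{t_{k-1},t_k})=g(t_k)-g(t_{k-1})$ when $t_k=b$, so the endpoint must be treated separately throughout Steps 2--3 and the Conclusion.

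Second, the $\le$ direction of Step 3 is not yet an argument. ``Outer regularity of $\vartot{\mu_g}$'' lets you approximate each piece $B_k$ of a Borel partition by an open set, but the resulting open sets overlap and are countable unions of intervals, so you do not obtain a partition into half-open subintervals. A clean way to close the gap is by duality: pick unit vectors $e_k$ with $\duality{e_k}{\mu_g(B_k)}=\norm{\mu_g(B_k)}{}$, set $\phi:=\sum_k e_k\indicator_{B_k}$, and approximate $\phi$ in $\L^1(\vartot{\mu_g};\H)$ by a genuine step function $\psi=\sum_j y_j\indicator_{\clsxint{s_{j-1},s_j}}$ with $\norm{y_j}{}\le 1$ (density of step functions in $\L^1$ of a finite Borel measure). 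Then
\[
\sum_k \norm{\mu_g(B_k)}{}=\int_I\duality{\phi}{\de\mu_g}\le\int_I\duality{\psi}{\de\mu_g}+\eps
=\sum_j\duality{y_j}{g(s_j)-g(s_{j-1})}+\eps\le \V(g,I)+\eps,
\]
which gives $\vartot{\mu_g}(I)\le \V(g,I)$. Combined with your $\ge$ inequality on every $\clsxint{c,d}$ and additivity, equality of the measures follows. Your own diagnosis that this step ``requires a genuine approximation argument from vector measure theory'' is exactly right; it just needs to be executed rather than asserted.
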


The measure $\mu_{f}$ is called \emph{Lebesgue-Stieltjes measure} or \emph{differential measure} of $f$. Let us see the connection with the distributional derivative. If $f \in \BV(I;\H)$ and if $\overline{f}: \ar \function \H$ is defined by
\begin{equation}\label{extension to R}
  \overline{f}(t) :=
  \begin{cases}
    f(t) 	& \text{if $t \in I$} \\
    f(\inf I)	& \text{if $\inf I \in \ar$, $t \not\in I$, $t \le \inf I$} \\
    f(\sup I)	& \text{if $\sup I \in \ar$, $t \not\in I$, $t \ge \sup I$}
  \end{cases},
\end{equation}
then, as in the scalar case, it turns out (cf. \cite[Section 2]{Rec11a}) that $\mu_{f}(B) = \D \overline{f}(B)$ for every 
$B \in \borel(\ar)$, where $\D\overline{f}$ is the distributional derivative of $\overline{f}$, i.e.
\[
  - \int_\ar \varphi'(t) \overline{f}(t) \de t = \int_{\ar} \varphi \de \D \overline{f} 
  \qquad \forall \varphi \in \Czero_{c}^{1}(\ar;\ar),
\]
$\Czero_{c}^{1}(\ar;\ar)$ being the space of real continuously differentiable functions on $\ar$ with compact support.
Observe that $\D \overline{f}$ is concentrated on $I$: $\D \overline{f}(B) = \mu_f(B \cap I)$ for every $B \in \borel(I)$, hence in the remainder of the paper, if $f \in \BV(I,\H)$ then we will simply write
\begin{equation}
  \D f := \D\overline{f} = \mu_f, \qquad f \in \BV(I;\H),
\end{equation}
and from the previous discussion it follows that 
\begin{equation}
  \norm{\D f}{} = \vartot{\D f}(\clint{0,T}) = \norm{\mu_f}{}  \qquad \forall f \in \BV(I;\H).
\end{equation}
If $I$ is bounded and $p \in \clint{1,\infty}$, then the classical Sobolev space $\W^{1,p}(I;\H)$ consists of those functions $f \in \Czero(I;\H)$ such that $\D f = g\leb^1$ for some $g \in \L^p(I;\H)$ and we have  $\W^{1,p}(I;\H) = \AC^p(I;\H)$. Let us also recall that if $f \in \W^{1,1}(I;\H)$ then {}{ the derivative $f'(t)$  exists} for $\leb^1$-a.e. in $t \in I$, 
$\D f = f' \leb^1$, and $\V(f,I) = \int_I\norm{f'(t)}{}\de t$ (cf., e.g. \cite[Appendix]{Bre73}).

In \cite[Lemma 6.4 and Theorem 6.1]{Rec11a} it is proved that

\begin{Prop}\label{P:BV chain rule}
Assume that $J \subseteq \ar$ is a bounded interval and $h : I \function J$ is nondecreasing.
\begin{itemize}
\item[(i)]
  $\D h(h^{-1}(B)) = \leb^{1}(B)$ for every $B \in \borel(h(\cont(h)))$.
\item[(ii)]
 If $f \in \Lip(J;\H)$ and $g : I \function \H$ is defined by
\begin{equation}\notag
  g(t) := 
  \begin{cases}
    f'(h(t)) & \text{if $t \in \cont(h)$} \\
    \ \\
    \dfrac{f(h(t+)) - f(h(t-))}{h(t+) - h(t-)} & \text{if $t \in \discont(h)$}
  \end{cases},
\end{equation}
then $f \circ h \in \BV(I;\H)$ and $\D\ \!(f \circ h) = g \D h$. This result holds even if $f'$ is replaced by any of its 
$\leb^{1}$-representatives.
\end{itemize}
\end{Prop}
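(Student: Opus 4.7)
My plan is to prove (i) first, then deduce (ii) by combining (i) with a decomposition of $\D h$ into its continuous and atomic parts. For (i), I would observe that $\D h$ is a finite positive Borel measure on $I$ (since $h$ is bounded and nondecreasing), so the set function $\nu(B) := \D h(h^{-1}(B))$ is a well-defined finite positive Borel measure on $h(\cont(h))$. To show $\nu = \leb^1$ on $\borel(h(\cont(h)))$, I would check equality on the family of half-open intervals $\clsxint{c,d} \subseteq h(\cont(h))$ with $c,d \in h(\cont(h))$, and then invoke Caratheodory's extension/uniqueness. The key point is that if $c \in h(\cont(h))$, pick $t_c \in \cont(h)$ with $h(t_c) = c$; then $h(t_c-) = h(t_c) = c$, so that $\D h$ assigns mass $h(t_d-) - h(t_c-) = d - c = \leb^1(\clsxint{c,d})$ to the interval $h^{-1}(\clsxint{c,d})$.

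For (ii), I would first note that since $h$ is bounded and monotone, $h \in \BV(I;\ar)$, and since $f \in \Lip(J;\H)$, the composition satisfies $\pV(f \circ h, I) \le \Lipcost(f) \pV(h, I)$, so $f \circ h \in \BV(I;\H)$. The Lipschitz function $f$ is a.e.\ differentiable on $J$ with $\D f = f'\leb^1$; let $N \subseteq J$ be the $\leb^1$-null set where $f'$ fails to exist. By (i), $\D h(h^{-1}(N) \cap \cont(h)) = \leb^1(N) = 0$, so $g$ is well-defined $\D h$-a.e. To prove $\D(f \circ h) = g\,\D h$, I would verify that both measures agree on intervals $\clsxint{a,b} \subseteq I$, which by Theorem \ref{existence of Stietjes measure} suffices. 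The left-hand side evaluates to $f(h(b-)) - f(h(a-))$. For the right-hand side, split
\begin{equation}\notag
  \int_{\clsxint{a,b}} g\,\de \D h
  = \int_{\clsxint{a,b} \cap \cont(h)} f'(h(t))\,\de \D h(t)
     + \sum_{t \in \clsxint{a,b} \cap \discont(h)} \bigl(f(h(t+)) - f(h(t-))\bigr),
\end{equation}
where the atomic contribution is immediate because $\D h(\{t\}) = h(t+) - h(t-)$ kills the denominator in the definition of $g(t)$ on $\discont(h)$.

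For the continuous piece, by (i) the measure $h_{\#}(\D h \restriction \cont(h))$ coincides with $\leb^1$ on $\borel(h(\cont(h)))$, so the standard change-of-variables formula for pushforward measures (applied to the $\H$-valued integrand $f'$) yields
\begin{equation}\notag
  \int_{\clsxint{a,b} \cap \cont(h)} f'(h(t))\,\de \D h(t)
  = \int_{h(\clsxint{a,b} \cap \cont(h))} f'(s)\,\de s.
\end{equation}
Enumerating the at-most-countably-many jumps $\{t_k\}$ of $h$ in $\clsxint{a,b}$, the image $h(\clsxint{a,b} \cap \cont(h))$ differs from $\clsxint{h(a-),h(b-)}$ only by the countable union of the jump intervals $\opint{h(t_k-),h(t_k+)}$ (modulo $\leb^1$-null sets), and the fundamental theorem of calculus for the Lipschitz function $f$ gives a telescoping identity: the sum of jumps $\sum_k (f(h(t_k+)) - f(h(t_k-)))$ plus the integrals of $f'$ over the complementary pieces reassembles to $f(h(b-)) - f(h(a-))$. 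This matches the left-hand side and finishes the proof on intervals; the extension to all Borel sets is then automatic.

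The main obstacle I anticipate is the rigorous justification of the pushforward/change-of-variables step in (i)--(ii), since $h$ is not strictly monotone and need not be continuous: one has to be careful that $h^{-1}$ of an interval $\clsxint{c,d} \subseteq h(\cont(h))$ is itself an interval whose $\D h$-measure matches $d-c$ exactly (and not off by some jump mass), which is precisely what the continuity condition at the endpoints buys us. The telescoping step in (ii) also requires some care in summing a series of jump contributions in a Banach space, but this is handled by absolute convergence coming from $f$ being Lipschitz and $h$ being BV.
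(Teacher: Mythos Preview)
The paper does not give its own proof of this proposition; it simply cites \cite[Lemma~6.4 and Theorem~6.1]{Rec11a}. So there is no argument in the present text against which to compare your attempt.

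Your proof of (i) has a genuine gap, exactly at the point you flagged as the main obstacle. You assert that when $c,d\in h(\cont(h))$ the preimage $h^{-1}(\clsxint{c,d})$ carries $\D h$-mass $d-c$, claiming that ``the continuity condition at the endpoints buys us'' immunity from jump mass. It does not: take $I=\clint{0,3}$ with $h(t)=t$ on $\clsxint{0,1}$, $h\equiv 2$ on $\clsxint{1,2}$, and $h(t)=t$ on $\clint{2,3}$. Then $t=1$ is a jump with $h(1-)=1$, $h(1)=h(1+)=2$, and $h(\cont(h))=\clsxint{0,1}\cup\clint{2,3}$; choosing $c=2$, $d=5/2$ one finds $h^{-1}(\clsxint{2,5/2})=\clsxint{1,5/2}$, which contains the atom at $t=1$ and has $\D h$-mass $3/2\neq 1/2$. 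This counterexample shows that (i), as literally transcribed here, cannot hold without qualification; what one can prove --- and what you in fact silently switch to when arguing (ii), writing $h^{-1}(N)\cap\cont(h)$ and invoking the pushforward $h_{\#}(\D h\restriction\cont(h))$ --- is the corrected identity $\D h\bigl(h^{-1}(B)\cap\cont(h)\bigr)=\leb^{1}(B)$. With that amendment your verification on a generating $\pi$-system does go through (one must also replace ``$\clsxint{c,d}\subseteq h(\cont(h))$'' by the relative intervals $\clsxint{c,d}\cap h(\cont(h))$, since $h(\cont(h))$ need not contain any nondegenerate interval), and your split-and-telescope strategy for (ii) is then correct. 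Note that the paper itself never invokes (i) directly; only (ii) is used, and (ii) is unaffected by this issue.
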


In the remainder of the paper we address our attention to left and right continuous functions of bounded variation on a compact interval $\clint{a,b}$, ($-\infty < a < b < \infty$). In this case we have
\begin{equation}\label{BV-norm-left}
  \norm{\D f}{} = \norm{\mu_f}{} = \pV(f,\opint{a,b}) + \norm{f(a+) - f(a)}{} = \pV(f,\clint{a,b})
  \qquad \forall f \in \BV^\l(\clint{a,b};\H),
\end{equation}
\begin{equation}\label{BV-norm-right}
  \norm{\D f}{} = \norm{\mu_f}{} = \pV(f,\opint{a,b}) + \norm{f(b) - f(b-)}{} = \pV(f,\clint{a,b})
  \qquad \forall f \in \BV^\r(\clint{a,b};\H),
\end{equation}
therefore when we consider left (resp. right) continuous functions we are essentially dealing with Lebesgue equivalence classes of functions with a special view on the initial point $a$ (resp. final point $b$), allowing us to take into account Dirac masses at $a$ or $b$. We will consider on $\BV(\clint{a,b};\H)$ the classical $\BV$-norm defined by
\begin{equation}\label{def BVnorm}
  \norm{f}{\BV} := \norm{f}{\infty} + \V(f,\clint{a,b}), \qquad f \in \BV(\clint{a,b};\H),
\end{equation}
which is equivalent to the norm defined by $\interleave f \interleave_{\BV} := \norm{f(0)}{} + \V(f,\clint{a,b})$, 
$f \in \BV(\clint{a,b};\H)$. Observe that we have 
\[
\norm{f}{\BV} = \norm{f}{\infty} + \norm{\D f}{} \qquad \forall f \in \BV^\l(\clint{a,b};\H) \cup \BV^\r(\clint{a,b};\H).
\]
The topology induced by $\d_{us}$ is clearly weaker than the one induced by $\norm{\cdot}{\BV}$.


\section{Statement of the main result}\label{S:state main result}

In this section we state the main theorem of the present paper. To this aim we recall the well known existence results about sweeping processes and the play operator.

We assume that
\begin{gather}
   \Z \in \Conv_\H, \label{Z closed convex-results}  \\
   0 < T < \infty. \label{T>0-results}
\end{gather}
Let us start with the general existence result for sweeping processes proved in \cite{Mor77}.

\begin{Thm}\label{T:existence general BVsweep}
If $\C \in \BV^\r(\clint{0,T};\Conv_\H)$ and $y_0 \in \H$, then there is a unique $\M(y_0,\C) := y \in \BV^\r(\clint{0,T};\H)$, such that there exist a measure $\mu : \borel(\clint{0,T}) \function \clsxint{0,\infty}$ and a function $v \in \L^1(\mu;\H)$ satisfying 
\begin{alignat}{3}
  & y(t) \in \C(t) & \qquad & \forall t \in \clint{0,T}, \label{y in C - BVsweep} \\
  & \!\D y = v \mu, \label{Dy = w mu - BVsweep} \\
  & v(t) + \partial I_{\C(t)}(y(t)) \ni 0 & \qquad & \text{for $\mu$-a.e. $t \in \clint{0,T}$}, \label{diff. incl. - BVsweep} \\
  & y(0) = \Proj_{\C(0)}(y_{0}). \label{in. cond. - BVsweep}
\end{alignat}
The resulting solution operator  $\M : \H \times \BV^\r(\clint{0,T};\Conv_\H)$ $\function$ $\BV^\r(\clint{0,T};\H)$ is continuous when $\BV^\r(\clint{0,T};\Conv_\H)$ is endowed with the topology induced by $\d_{us}$ and 
$\BV^\r(\clint{0,T};\H)$ is endowed with the topology induced by $\d_\infty$. We have
$\M(\H \times \BV(\clint{0,T};\Conv_\H) \cap \Czero(\clint{0,T};\Conv_\H)) \subseteq 
\BV(\clint{0,T};\H) \cap \Czero(\clint{0,T};\H)$. For every $p \in \clint{1,\infty}$ we have
$\M(\H \times \AC^p(\clint{0,T};\Conv_\H)) \subseteq \W^{1,p}(0,T;\H)$, and if
$\C$ $\in$ $\AC^p(\clint{0,T};\Conv_\H)$ then $y = \M({y_0},\C)$ is the unique function satisfying  
\eqref{y in C - BVsweep}, \eqref{in. cond. - BVsweep}, and
\begin{equation}
   y'(t) + \partial I_{\C(t)}(y(t)) \ni 0 \qquad \text{for $\leb^{1}$-a.e. $t \in \clint{0,T}$}.\label{diff. incl. - Lip}
\end{equation}
In this case it holds 
\begin{equation}\label{estimate Lip sweep proc}
  \Lipcost(y) \le \Lipcost(\C).
\end{equation}
\end{Thm}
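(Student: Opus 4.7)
I would first establish the result for $\C \in \Lip(\clint{0,T};\Conv_\H)$ by Moreau's catching-up algorithm. Partition $\clint{0,T}$ into $n$ equal parts $t_k^n := kT/n$, set $y_0^n := \Proj_{\C(0)}(y_0)$, and define inductively $y_{k+1}^n := \Proj_{\C(t_{k+1}^n)}(y_k^n)$. Non-expansiveness of the projection onto $\C(t_{k+1}^n)$ together with the estimate $\d_\hausd(\C(t_k^n),\C(t_{k+1}^n)) \le \Lipcost(\C)\,T/n$ yields $\norm{y_{k+1}^n - y_k^n}{} \le \Lipcost(\C)\,T/n$, so the piecewise affine interpolants $\bar y_n$ are equi-Lipschitz with constant $\Lipcost(\C)$. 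Arzel\`a--Ascoli extracts a uniform limit $y \in \Lip(\clint{0,T};\H)$ satisfying $\Lipcost(y) \le \Lipcost(\C)$. The discrete variational inequality $\duality{y_{k+1}^n - y_k^n}{v - y_{k+1}^n} \ge 0$ for $v \in \C(t_{k+1}^n)$, combined with Hausdorff-continuity of $\C$, passes to the limit to give $-y'(t) \in N_{\C(t)}(y(t))$ at every Lebesgue point of $y'$, i.e.\ \eqref{diff. incl. - Lip}.

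\textbf{Reduction from $\BV^\r$ to Lipschitz.} For general $\C \in \BV^\r(\clint{0,T};\Conv_\H)$, I would apply the geodesic reparametrization developed in \cite{Rec11c, Rec15b, Rec16}. Define $\ell_\C(t) := T\pV(\C,\clint{0,t})/\pV(\C,\clint{0,T})$ (the degenerate case $\pV(\C,\clint{0,T})=0$ being trivial) and fill in every jump of $\C$ by the geodesic formula \eqref{convex geodesic}; this yields a Lipschitz mapping $\Ctilde : \clint{0,T} \function \Conv_\H$ with $\C = \Ctilde \circ \ell_\C$. Let $\widetilde{y} := \M(y_0,\Ctilde)$ be the Lipschitz solution supplied by the previous step and set $y := \widetilde{y} \circ \ell_\C$; the main theorem of \cite{Rec16} then guarantees that $y \in \BV^\r$ solves \eqref{y in C - BVsweep}--\eqref{in. cond. - BVsweep} for a suitable choice of $\mu$ (the pushforward of $\leb^1$ under a right inverse of $\ell_\C$) and density $v$ recovered through the BV chain rule of Proposition~\ref{P:BV chain rule}. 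When $\C \in \AC^p(\clint{0,T};\Conv_\H)$, the arc length $\ell_\C$ is itself in $\AC^p$, hence $y \in \W^{1,p}(0,T;\H)$, and one may take $\mu = \leb^1$, recovering the pointwise inclusion \eqref{diff. incl. - Lip}.

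\textbf{Uniqueness.} If $y_1, y_2$ are two candidate solutions with measures $\mu_i$ and densities $v_i$, I would set $\mu := \vartot{\D y_1} + \vartot{\D y_2}$ and write $\D y_i = w_i \mu$ by Radon--Nikodym. The normal cone inclusion \eqref{diff. incl. - BVsweep} together with monotonicity of $N_{\C(t)}$ yields $\duality{w_1(t) - w_2(t)}{y_1(t) - y_2(t)} \le 0$ for $\mu$-a.e.\ $t$. Moreau's integration-by-parts formula applied to $t \mapsto \norm{y_1(t) - y_2(t)}{}^2$ (valid for $\BV^\r$ maps, the jump contributions being nonnegative) then gives $\norm{y_1 - y_2}{\infty}^2 \le \norm{y_1(0) - y_2(0)}{}^2$, which vanishes by \eqref{in. cond. - BVsweep}.

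\textbf{Continuity.} For the $\d_{us} \to \d_\infty$ continuity, suppose $(y_0^n, \C_n) \to (y_0, \C)$ in the norm of $\H$ times the $\d_{us}$ topology. Strict convergence implies $\pV(\C_n,\clint{0,T}) \to \pV(\C,\clint{0,T})$ and, by a Helly-type argument using monotonicity of $t \mapsto \pV(\C,\clint{0,t})$, $\ell_{\C_n} \to \ell_\C$ uniformly. The geodesic fillings $\Ctilde_n$ form a uniformly equi-Lipschitz family converging uniformly to $\Ctilde$. Stability of the Lipschitz sweeping process --- a direct consequence of non-expansiveness of the projection --- gives $\M(y_0^n,\Ctilde_n) \to \M(y_0,\Ctilde)$ uniformly, and composition with the uniformly convergent $\ell_{\C_n}$ preserves uniform convergence. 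The main obstacle I anticipate is verifying $\Ctilde_n \to \Ctilde$ uniformly, since a single jump of $\C$ can be approximated by $\C_n$ in qualitatively distinct ways (smeared small oscillations, displaced jumps, etc.); the hypothesis of strict convergence is exactly what rules out pathological spreading of ``jump mass'' outside $\discont(\C)$ and keeps the reparametrizations $\Ctilde_n$ well-behaved.
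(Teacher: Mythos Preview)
The paper does not actually prove this theorem: the Remark immediately following the statement attributes every assertion to the literature --- existence and uniqueness to \cite[Propositions~3a,~3b]{Mor77}, continuity to \cite[Proposition~2g]{Mor77}, the regularity statements and \eqref{estimate Lip sweep proc} to \cite[Corollary~2c]{Mor77} and \cite[Proposition~3.2]{Rec11c}, and \eqref{diff. incl. - Lip} to \cite[Proposition~3c]{Mor77}. Theorem~\ref{T:existence general BVsweep} is background, not a result established in the paper.

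Your sketch is therefore more than the paper supplies, and it follows a genuinely different route from Moreau's original. Moreau in \cite{Mor77} runs the catching-up algorithm directly on the $\BV^\r$ moving set and proves the discrete iterates form a \emph{Cauchy} sequence in $\Czero(\clint{0,T};\H)$; no reparametrization is involved. You instead handle the Lipschitz case first and then invoke the geodesic reparametrization of \cite{Rec16} to reduce the general $\BV^\r$ case. That is a legitimate strategy (and indeed the viewpoint exploited later in this paper), but note a logical subtlety: Theorem~\ref{T:Lip-reduction of BVsweep}, as stated here and in \cite{Rec16}, is an \emph{identity} between two already-defined solutions, so using it for existence requires you to rephrase it as a verification that $\widetilde y\circ\ell_\C$ satisfies \eqref{y in C - BVsweep}--\eqref{in. cond. - BVsweep}; this is doable but should be said explicitly.

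There is also a genuine gap in your Lipschitz step. Arzel\`a--Ascoli does not apply in an infinite-dimensional Hilbert space: equi-Lipschitz plus bounded values gives only weak sequential precompactness of $(\bar y_n(t))_n$ in $\H$, not norm precompactness, so you cannot directly extract a uniformly convergent subsequence, and passing the discrete variational inequality to a weak limit is delicate (the inner product $\duality{y_{k+1}^n-y_k^n}{v-y_{k+1}^n}$ is not weakly continuous in both slots). Moreau's remedy is to bypass compactness entirely: comparing two catching-up sequences on nested partitions and using monotonicity of the normal cone yields a Cauchy estimate in $\Czero(\clint{0,T};\H)$, so the full sequence converges strongly. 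Either adopt that argument, or restrict to separable $\H$ and argue via weak convergence plus a norm-convergence upgrade (e.g.\ from the energy inequality).
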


\begin{Rem}
The uniqueness and existence statements of the previous theorem are provided in 
\cite[Proposition 3a, Proposition 3b]{Mor77}. The continuity of $\M$ is proved in \cite[Proposition 2g]{Mor77}. The regularity statements and \eqref{estimate Lip sweep proc} are proved in \cite[Corollary 2c]{Mor77} (for 
$p \in \opint{1,\infty}$ see \cite[Proposition 3.2]{Rec11c}), while \eqref{diff. incl. - Lip} is shown in 
\cite[Proposition 3c]{Mor77}.
\end{Rem}

The differential inclusion \eqref{diff. incl. - BVsweep} is usually called 
\emph{sweeping process driven by the moving convex set $\C$}. Now we recall the definition of the 
\emph{play operator}, that is the operator solution of the sweeping process driven by a moving convex set with constant shape.

\begin{Def}
For any $u \in \BV^\r(\clint{0,T};\H)$ define $\C_u \in \BV^\r(\clint{0,T};\Conv_\H)$ by
\begin{equation}\label{def Cu}
  \C_u(t) := u(t) - \Z, \qquad t \in \clint{0,T}.
\end{equation} 
The operator $\P : \Z \times \BV^\r(\clint{0,T};\H) \function \BV^\r(\clint{0,T};\H)$ defined by
\begin{equation}\label{def. play}
  \P(z_0, u) := \M({u(0)-z_0},\C_u), \qquad z_0 \in \Z, \ u \in \BV^\r(\clint{0,T};\H),
\end{equation} 
is called \emph{play operator (with characteristic $\Z$).}
\end{Def}

The following theorem will be proved in Section \ref{S:integral formulation} and summarizes the main properties of $\P$ inherited by Theorem \ref{T:existence general BVsweep}. It also provides an integral variational characterization of $\P$.

\begin{Thm}\label{thm on P}
The operator $\P : \Z \times \BV^\r(\clint{0,T};\H) \function \BV^\r(\clint{0,T};\H)$ is continuous if $\BV^\r(\clint{0,T};\H)$ is endowed with the topology induced by $\d_{us}$ in the domain and by $\d_\infty$ in the codomain. We have 
$\P(\Z \times \BV(\clint{0,T};\H) \cap \Czero(\clint{0,T};\H))$ $\subseteq$ $\BV(\clint{0,T};\H) \cap \Czero(\clint{0,T};\H)$. 
Moreover if $z_0 \in \Z$ and $u \in \BV^\r(\clint{0,T};\H)$, then $\P(z_0, u) = y \in \BV^\r(\clint{0,T};\H)$ is the unique function such that
\begin{align}
  & u(t) - y(t) \in \Z \qquad \text{$\forall t \in \clint{0,T}$}, \label{u-y in Z-KrLa} \\
  & \int_{\clint{0,T}} \bduality{z(t) - u(t) + y(t)}{\de \D y(t)} \le 0 \qquad \forall z \in \BV^\r(\clint{0,T};\Z), 
       \label{var in KrLa} \\
  & u(0) - y(0) = z_{0}. \label{initial cond for P KrLa}		 
\end{align}
Equivalently $\P(z_0,u) = y \in \BV^\r(\clint{0,T};\H)$ is the unique function satisfying \eqref{u-y in Z-KrLa}, 
\eqref{initial cond for P KrLa}, and
\begin{equation}\label{var in KrLa-test Linfty}
  \int_{\clint{0,T}} \bduality{z(t) - u(t) + y(t)}{\de \D y(t)} \le 0 
  \qquad \forall z \in \L^\infty(\clint{0,T};\H),\ z(\clint{0,T}) \subseteq \Z.
\end{equation}
For
every $p \in \clint{1,\infty}$ we have $\P(\Z \times \W^{1,p}(0,T;\H))$ $\subseteq$ $\W^{1,p}(0,T;\H)$ and if $u \in \W^{1,p}(0,T;\H)$ then $\P(z_0, u) = y$ is the unique function satisfying \eqref{u-y in Z-KrLa}, \eqref{initial cond for P KrLa}, and
\begin{equation}
  \lduality{z(t) - u(t) - y(t)}{y'(t)} \le 0 \qquad 
  \text{for $\leb^1$-a.e. $t \in \clint{0,T},\ \forall z \in \Z$}. \label{inequality for x} 
\end{equation}
\end{Thm}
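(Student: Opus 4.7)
$\textbf{Strategy and routine parts.}$ I would work throughout from the identity $\P(z_0, u) = \M(u(0)-z_0, \C_u)$, noting that the map $u \mapsto \C_u$ is an isometry from $(\BV^\r(\clint{0,T};\H), \d_{us})$ into $(\BV^\r(\clint{0,T};\Conv_\H), \d_{us})$ since $\d_{\hausd}(\C_u(t), \C_v(t)) = \norm{u(t)-v(t)}{}$ and hence $\pV(\C_u, \cdot) = \pV(u, \cdot)$. This isometry preserves continuity, sends $\W^{1,p}$ into $\AC^p$ with equal Lipschitz/arc-length norms, and makes the $\d_{us}$-$\d_\infty$ continuity of $\P$, the preservation of continuous/Sobolev inputs, and the initial condition \eqref{initial cond for P KrLa} (which reduces to $\Proj_{\C_u(0)}(u(0)-z_0) = u(0)-z_0$, because $u(0)-z_0 \in \C_u(0)$) immediate consequences of Theorem \ref{T:existence general BVsweep}.

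$\textbf{Deriving the integral inequalities.}$ Setting $y := \P(z_0, u)$ with $\D y = v\mu$ and $-v(t) \in N_{\C_u(t)}(y(t))$ $\mu$-a.e.\ as in Theorem \ref{T:existence general BVsweep}, for any bounded Borel $z : \clint{0,T} \to \H$ with $z(t) \in \Z$ the map $w(t) := u(t) - z(t)$ lies in $\C_u(t)$, and the normal-cone inequality at $y(t)$ rewrites as $\duality{z(t) - u(t) + y(t)}{v(t)} \le 0$ for $\mu$-a.e.\ $t$. Integration against $\mu$ then proves \eqref{var in KrLa} and \eqref{var in KrLa-test Linfty} in one stroke; the $\W^{1,p}$ pointwise inequality \eqref{inequality for x} is the same computation applied to $\mu = \leb^1$, $v = y'$, and a constant $z \in \Z$.

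$\textbf{Uniqueness --- the main technical step.}$ The heart of the argument is to prove that the conditions \eqref{u-y in Z-KrLa}--\eqref{initial cond for P KrLa} determine $y$ uniquely. Given two solutions $y_1, y_2$ and setting $Y := y_1 - y_2$, for each fixed $\tau \in \opint{0,T}$ I would employ the switching test function
\[
   z_\tau(t) :=
   \begin{cases}
     u(t) - y_2(t), & t < \tau, \\
     u(t) - y_1(t), & t \ge \tau,
   \end{cases}
\]
which lies in $\BV^\r(\clint{0,T};\Z)$ (right-continuity at $\tau$ uses right-continuity of $u$ and $y_1$, and elsewhere of $u$ and $y_2$). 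Plugging $z_\tau$ into \eqref{var in KrLa} for $y_1$, then swapping $y_1 \leftrightarrow y_2$ and adding, gives $\int_{\clsxint{0,\tau}} \duality{Y}{\de \D Y} \le 0$. Applying the BV integration-by-parts identity to $\norm{Y}{}^2$, together with $Y(0) = 0$, yields
\[
  2\int_{\clsxint{0,\tau}} \duality{Y(t)}{\de \D Y(t)} = \norm{Y(\tau-)}{}^2 + \sum_{s \in \clsxint{0,\tau}} \norm{Y(s)-Y(s-)}{}^2,
\]
so $Y(\tau-) = 0$ and every jump of $Y$ before $\tau$ vanishes. The analogous argument with the constant test $z \equiv u - y_2$ on all of $\clint{0,T}$ yields $Y(T) = 0$, hence $Y \equiv 0$; the $\W^{1,p}$ uniqueness is a special case (or follows from the classical $\W^{1,1}$ chain rule for $\norm{Y}{}^2$). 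The delicate step is precisely this BV chain-rule identity: the atom of $\D Y$ at $\tau$ must be excluded carefully from the half-open interval $\clsxint{0,\tau}$, and one has to verify that $z_\tau$ is genuinely both right-continuous and $\Z$-valued. Once this is in place, the remainder of the theorem is a routine translation via the isometry $u \mapsto \C_u$.
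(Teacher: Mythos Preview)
Your proof is correct, but the uniqueness argument differs substantially from the paper's. The routine parts and the derivation of \eqref{var in KrLa}, \eqref{var in KrLa-test Linfty} from the measure--differential inclusion are the same as in the paper. For the converse (uniqueness), however, the paper does \emph{not} compare two solutions directly. Instead it shows that any $y$ satisfying the integral inequality actually satisfies the pointwise inclusion \eqref{diff. incl. - BVsweep}: writing $y=\widetilde y\circ\ell_y$ and $\D y=v\,\D\ell_y$ via Proposition~\ref{P:BV chain rule}, it localizes \eqref{var in KrLa} by choosing test functions $z=\zeta\indicator_{\clsxint{s-h,s+h}}+(u-y)\indicator_{\clint{0,T}\setminus\clsxint{s-h,s+h}}$, divides by $\ell_y(s+h)-\ell_y(s-h)$, and passes to the limit using a Lebesgue--point theorem for the measure $\D\ell_y$ (treating continuity points and jump points separately). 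This recovers $\duality{\zeta-u(s)+y(s)}{v(s)}\le 0$ for $\D\ell_y$-a.e.\ $s$, and uniqueness then comes from Theorem~\ref{T:existence general BVsweep}.

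Your energy/monotonicity argument with the switching test $z_\tau$ and the $\BV$ chain rule for $\norm{Y}{}^2$ is shorter and more self-contained: it needs no reparametrization, no Lebesgue--point differentiation, and no appeal to the uniqueness statement in Theorem~\ref{T:existence general BVsweep}. The paper's route, on the other hand, yields more: it establishes the genuine \emph{equivalence} between the integral formulation and the measure--differential inclusion \eqref{Dy = w mu - BVsweep}--\eqref{diff. incl. - BVsweep}, not merely that both determine the same $y$. It also fits the paper's broader programme of reducing $\BV$ problems to Lipschitz ones via arc-length reparametrization. The ``delicate step'' you flag---the $\BV$ identity $2\int_{\clsxint{0,\tau}}\langle Y,\de\D Y\rangle=\norm{Y(\tau-)}{}^2+\sum_{s\in\clsxint{0,\tau}}\norm{Y(s)-Y(s-)}{}^2$ for right-continuous $Y$ with $Y(0)=0$---is standard, and your handling of the endpoint $T$ and the right-continuity of $z_\tau$ is correct.
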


\begin{Rem}
The integral formulation \eqref{var in KrLa} (or \eqref{var in KrLa-test Linfty}) is analogous to the formulation used in
\cite{KreLau02} where the Young integral is used. Our proof in Section \ref{S:integral formulation} is completely independent and uses only tools from differentiation theory.
\end{Rem}

Here is the main theorem that we will prove in Section \ref{S:proof main thm}.

\begin{Thm}\label{T:main thm}
The play operator $\P : \Z \times \BV^\r(\clint{0,T};\H) \function \BV^\r(\clint{0,T};\H)$ is continuous if 
$\BV^\r(\clint{0,T};\H)$ is endowed with the topology induced by the $\BV$-norm \eqref{def BVnorm}. 
\end{Thm}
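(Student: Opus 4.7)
The plan follows the route announced in the Introduction: reduce the claim to a Lipschitz continuous sweeping process via the arc-length reparametrization of \cite{Rec16}, then pull the conclusion back to the original time variable with vector-measure tools. For $u \in \BV^\r(\clint{0,T};\H)$ set $\C_u(t) := u(t) - \Z$; since $\C_u$ is a translate of $-\Z$ one has $\d_\hausd(\C_u(s),\C_u(t)) = \norm{u(t)-u(s)}{}$, whence $\V(\C_u,\clint{0,t}) = \V(u,\clint{0,t})$. Let $\ell_u$ be the normalized arc-length of $\C_u$ and let $\Ctilde_u \in \Lip(\clint{0,T};\Conv_\H)$ be the Lipschitz reparametrization with $\C_u = \Ctilde_u \circ \ell_u$ and with geodesic bridges \eqref{convex geodesic} on the jumps. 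The result of \cite{Rec16} recalled in the Introduction then yields the factorization
\[
\P(z_0, u) \;=\; Y_u \circ \ell_u, \qquad Y_u := \M(u(0) - z_0, \Ctilde_u) \in \Lip(\clint{0,T};\H).
\]

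Given $u_n \to u$ in $\BV$-norm, abbreviate $\ell_n := \ell_{u_n}$, $\Ctilde_n := \Ctilde_{u_n}$, $Y_n := Y_{u_n}$. Uniform convergence $\norm{\P(z_0, u_n) - \P(z_0, u)}{\infty} \to 0$ is immediate from Theorem \ref{thm on P}, since the $\BV$-norm dominates $\d_{us}$; the core task is the variation convergence $\V(\P(z_0, u_n) - \P(z_0, u),\clint{0,T}) \to 0$. From the $\BV$-norm hypothesis $\V(u_n - u, \clint{0, t}) \to 0$ uniformly in $t$, which forces $\ell_n \to \ell_u$ uniformly on $\clint{0,T}$ together with the total-variation convergence $\norm{\D\ell_n}{} = \V(u_n) \to \V(u) = \norm{\D\ell_u}{}$. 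A Hausdorff-geometric analysis of the geodesic \eqref{convex geodesic} between translates of $-\Z$ (using that such a geodesic stays within Hausdorff distance $\norm{a-b}{}$ of each endpoint) then produces $\d_\infty(\Ctilde_n, \Ctilde_u) \to 0$ with controlled Lipschitz constants. Applying Krej\v{c}\'{i}'s $\W^{1,1}$-continuity of the Lipschitz play/sweeping process \cite{Kre91, Kre97} to $\Ctilde_n \to \Ctilde_u$ gives $Y_n \to Y_u$ in $\W^{1,1}(0,T;\H)$, in particular $\int_0^T \norm{Y_n'(s) - Y_u'(s)}{}\de s \to 0$.

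To conclude, the $\BV$ chain rule of Proposition \ref{P:BV chain rule} writes $\D(Y_n \circ \ell_n) = g_n\,\D\ell_n$ and $\D(Y_u \circ \ell_u) = g\,\D\ell_u$, with integrands $g_n, g$ coinciding with $Y_n' \circ \ell_n$ and $Y_u' \circ \ell_u$ on the continuity sets of $\ell_n, \ell_u$ respectively and with the corresponding jump difference quotients at the atoms. Combining the $\W^{1,1}$-convergence of $Y_n$, the uniform convergence of $\ell_n$, and the total-variation convergence $\norm{\D\ell_n}{} \to \norm{\D\ell_u}{}$, one shows that $g_n\,\D\ell_n \to g\,\D\ell_u$ in total variation through the integral formalism of Section \ref{S:Preliminaries}, which delivers $\norm{\D(Y_n\circ\ell_n) - \D(Y_u\circ\ell_u)}{} \to 0$ and hence the claimed $\BV$-norm continuity.

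The main obstacle is the atomic part of $\D\ell_u$: at a jump point $t$ of $u$, the bridging intervals $\clint{\ell_n(t-),\ell_n(t)}$ and $\clint{\ell_u(t-),\ell_u(t)}$ of the geodesic fillings have different lengths (and, for jumps of $u$ that are approximated only by continuous increments of $u_n$, may even shrink to zero for one of them), so the integrands $g_n$ and $g$ are supported on shifting domains. Matching them and upgrading the easy weak$^*$ convergence of the vector measures $g_n\,\D\ell_n$ to honest total-variation convergence is the technical heart of the proof, and is precisely the point at which the strengthened $\BV$-norm hypothesis---equivalently, the total-variation convergence $\norm{\D\ell_n}{} \to \norm{\D\ell_u}{}$---must be exploited; neither the strict nor the uniform topology on $\BV^\r$ would provide enough control on the atomic masses to close the argument.
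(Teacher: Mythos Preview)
Your outline follows the paper's route up to the factorization $\P(z_0,u)=Y_u\circ\ell_u$ with $Y_u=\M(u(0)-z_0,\Ctilde_u)$, but the step where you ``apply Krej\v c\'{\i}'s $\W^{1,1}$-continuity of the Lipschitz play/sweeping process to $\Ctilde_n\to\Ctilde_u$'' is a genuine gap. The results in \cite{Kre91,Kre97} concern the play operator, i.e.\ moving sets of the \emph{constant shape} $\utilde(\sigma)-\Z$; their proof hinges on the orthogonal decomposition $\utilde'=(\P(\utilde))'+(\Stop(\utilde))'$. The reparametrized set $\Ctilde_u$ has this form only on $\ell_u(\clint{0,T})$; on each bridging interval $\opint{\ell_u(t-),\ell_u(t)}$ it is the intersection $(\C(t-)+D_{\lambda\rho_t})\cap(\C(t)+D_{(1-\lambda)\rho_t})$, which is \emph{not} a translate of $-\Z$. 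There is no $\W^{1,1}$-continuity theorem for general Lipschitz sweeping processes with nonsmooth sets that you can quote here; Theorem~\ref{T:existence general BVsweep} only gives $\d_\infty$-continuity of $\M$ under $\d_{us}$-convergence of the driving sets, so from $\d_\infty(\Ctilde_n,\Ctilde_u)\to 0$ you get $Y_n\to Y_u$ uniformly, not in $\W^{1,1}$. Your last paragraph correctly flags the difficulty of matching atoms, but that difficulty sits \emph{on top of} this unproven $\W^{1,1}$-convergence rather than after it.

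The paper avoids ever proving $Y_n\to Y_u$ in $\W^{1,1}$. Instead it passes to $w:=\Q(z_0,u)=2\P(z_0,u)-u$ and its Lipschitz lift $\what:=2Y_u-\utilde$. The point of Lemma~\ref{|what'| = cost} is that on $\ell_u(\cont(u))$---exactly where $\Ctilde_u$ \emph{is} a translate of $-\Z$---the orthogonality $\duality{\yhat'}{\utilde'-\yhat'}=0$ yields $\norm{\what'(\sigma)}{}=\V(u,\clint{0,T})/T$, a constant; on the bridges Lemma~\ref{L:particular sweeping process} makes $\what$ affine. Writing $\D w_n=g_n\,\D\ell_n$, one first gets weak convergence $g_n\convergedeb g$ in $\L^p(\D\ell;\H)$ via Dunford--Pettis (Theorem~\ref{measure dunford pettis}) and Lemma~\ref{Dwn -> Dw}; the constant-norm property then gives $\norm{g_n(t)}{}\to\norm{g(t)}{}$ pointwise on $\cont(u)$, and uniform convergence of $\what_n,\ell_n$ handles the atoms. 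Uniform convexity of $\L^p(\D\ell;\H)$ upgrades weak to strong convergence, which finally yields $\norm{\D(w_n-w)}{}\to 0$. The detour through $\Q$ and the constant-norm identity is the substitute for the $\W^{1,1}$-continuity you invoked; without it the argument does not close.
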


\begin{Rem}If we deal with left continuous functions the formulations of our problem need to be modified accordingly. More precisely if $\C \in \BV^\l(\clint{0,T};\Conv_\H)$ and $y_0 \in \H$ then there is a unique 
$\M(y_0,\C) := y \in \BV^\l(\clint{0,T};\H)$ such that there exist a measure 
$\mu : \borel(\clint{0,T}) \function \clsxint{0,\infty}$ and a function $v \in \L^1(\mu;\H)$ satisfying \eqref{y in C - BVsweep}, \eqref{Dy = w mu - BVsweep}, \eqref{in. cond. - BVsweep} and
\[
  v(t) + \partial I_{\C(t+)}(y(t+)) \ni 0 \qquad  \text{for $\mu$-a.e. $t \in \clint{0,T}$}.
\]
This can be easily proved by adapting the proof of \cite{Mor77} to the left continuous case, or one can argue by reducing to Lipschitz inputs by using exacly the same argument as in  \cite[Theorem 6.1]{Rec16}. The play operator 
$\P : \Z \times \BV^\l(\clint{0,T};\H) \function \BV^\l(\clint{0,T};\H)$ is defined by $\P(z_0, u) := \M({u(0)-z_0},\C_u)$ for $z_0 \in \Z$, $u \in \BV^\l(\clint{0,T};\H)$, where $\C_u \in \BV^\l(\clint{0,T};\Conv_\H)$ is given by $\C_u(t) := u(t) - \Z$, 
$t \in \clint{0,T}$. Moreover $\P(z_0, u) = y \in \BV^\l(\clint{0,T};\H)$ is the unique function satisfying \eqref{u-y in Z-KrLa}, \eqref{initial cond for P KrLa}, and
\[
  \int_{\clint{0,T}} \bduality{z(t) - u(t+) + y(t+)}{\de \D y(t)} \le 0 \qquad \forall z \in \BV^\l(\clint{0,T};\Z).
\]
A motivation of the use of left continuous functions is, e.g., the fact that the viscous regularizations of rate independent processes converge to a left continuous function for the viscosity coefficient approaching zero (cf. 
\cite[Theorem 2.4]{KreLie09}). Modifying our proofs in the obvious way we get the following

\begin{Thm}\label{T:main thm left version}
The play operator $\P : \Z \times \BV^\l(\clint{0,T};\H) \function \BV^\l(\clint{0,T};\H)$ is continuous if 
$\BV^\l(\clint{0,T};\H)$ is endowed with the topology induced by the $\BV$-norm \eqref{def BVnorm}. 
\end{Thm}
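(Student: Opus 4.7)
The plan is to follow the same strategy as for Theorem \ref{T:main thm}, systematically replacing the right-continuous representative of each $\BV$ function by its left-continuous counterpart. Since each building block of the right-continuous proof---the arc length reparametrization of Section \ref{S:reparametrization}, the integral variational characterization of Section \ref{S:integral representation}, the reduction to a Lipschitz sweeping process of Section \ref{S:reduction}, and the vector measure arguments of Section \ref{S:proof main thm}---admits a direct left-continuous analogue, the modifications required are purely notational. This relies on the fact that the Lebesgue--Stieltjes measure $\D f$ of $f \in \BV(\clint{0,T};\H)$ does not depend on the choice of left- or right-continuous representative, only on its $\leb^1$-equivalence class together with the two boundary values at $0$ and $T$.

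More concretely, given $u \in \BV^\l(\clint{0,T};\H)$ set $\C_u(t) := u(t) - \Z \in \BV^\l(\clint{0,T};\Conv_\H)$. The normalized arc length $\ell_u(t) := T\pV(u,\clint{0,t})/\pV(u,\clint{0,T})$ is itself left continuous, and one constructs a Lipschitz continuous map $\Ctilde_u : \clint{0,T} \function \Conv_\H$ with $\C_u = \Ctilde_u \circ \ell_u$ that, on each jump interval $\clint{\ell_u(t),\ell_u(t+)}$, is given by the geodesic formula \eqref{convex geodesic} connecting the two one-sided limits $\C_u(t)$ and $\C_u(t+)$. By the left-continuous counterpart of the main result of \cite{Rec16}, one still has the factorization $\M(y_0,\C_u) = \M(y_0,\Ctilde_u) \circ \ell_u$, and the integral variational characterization of $\P$ proved in Theorem \ref{thm on P} is replaced by its left-continuous version, in which the integrand involves $y(t+)$ and $u(t+)$ in place of $y(t)$ and $u(t)$, and test functions are taken in $\BV^\l(\clint{0,T};\Z)$, exactly as already noted in the remark preceding the statement.

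With these substitutions in place, the proof of Theorem \ref{T:main thm} transfers verbatim: for a sequence $u_n \to u$ in $\BV$-norm, one uses the identity $\P(z_0,u_n) = \M(u_n(0)-z_0,\Ctilde_{u_n}) \circ \ell_{u_n}$ to lift the problem to the Lipschitz setting, exploits the orthogonal decomposition of the derivative of the Lipschitz output into normal and tangential components at $\leb^1$-a.e. point of the rescaled interval, and finally transports the resulting convergence back to the original time scale via the vector measure tools recalled in Section \ref{S:Preliminaries}. The only technical point is the systematic bookkeeping of one-sided limits at jump points, which does not introduce any genuinely new difficulty. No substantial obstacle is expected beyond this notational care; this is precisely why the modifications can be described as \emph{obvious}.
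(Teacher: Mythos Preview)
Your proposal is correct and mirrors the paper's own treatment: the paper does not give a separate proof of this theorem but simply states that it follows by ``modifying our proofs in the obvious way,'' and you have spelled out precisely what those modifications are (left-continuous arc length, jump intervals of the form $\clint{\ell_u(t),\ell_u(t+)}$, test functions in $\BV^\l$, and the shifted integrand $u(t+),y(t+)$). If anything, your account is more detailed than the paper's.
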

\end{Rem}


\section{Reparametrizations}\label{S:reparametrization}

In this section we recall the notion of a reparametrization by the arclength of a $\Conv_\H$-valued $\BV$-function  introduced in \cite{Rec16}. This will be the key tool for the proof of our main theorem. We start with a more general notion of reparametrization in a general metric space setting.

Assume that \eqref{X complete metric space} holds and set
\begin{equation}\label{pairs of X with finite distance}
  \Phi_X := \{(x,y) \in \X \times \X\ :\ 0 < d(x,y) < \infty\}. \notag 
\end{equation}


\subsection{Reparametrization associated to a family of geodesics}

Let us recall \cite[Proposition 5.1]{Rec16}.

\begin{Prop}\label{P:reparametrization}

For $f \in \BV^\r(\clint{0,T};\X)${}{,} let $\ell_f : \clint{0,T} \function \clint{0,T}$ be defined by
\begin{equation}\notag
  \ell_f(t) := 
  \begin{cases}
    \dfrac{T}{\V(f,\clint{0,T})}\pV(f,\clint{0,t}) & \text{if $\V(f,\clint{0,T}) \neq 0$} 	\\
    															\\
    0 								& \text{if $\V(f,\clint{0,T}) = 0$}
  \end{cases}
    \qquad t \in \clint{0,T}.
\end{equation}
\begin{itemize}
\item[(i)]
  We have that $\ell_f$ is nondecreasing, $\discont(f) = \discont(\ell_f)$, and
  \begin{equation}\label{ell(0,T)=}
    \ell_f(\clint{0,T}) = \clint{0,T} \setmeno \bigcup_{t \in \discont(f)} \cldxint{\ell_f(t-),\ell_f(t)}.
  \end{equation}
Moreover there is a unique $F : \ell_f(\clint{0,T}) \function \X$ such that
  \begin{align}
    f = F \circ \ell_f, \qquad \Lipcost(F) \le \frac{\V(f,\clint{0,T})}{T}. \notag
  \end{align}
\item[(ii)]
Let $\mathscr{G} = (g_{(x,y)})_{(x,y) \in \Phi}$ be a family of geodesics connecting $x$ to $y$ for every $(x,y) \in \Phi_X$. If  $\ftilde : \clint{0,T} \function \X$ is defined by
\begin{equation}\label{reparametrization w.r.t. F}
  \ftilde(\sigma) := 
  \begin{cases}
    F(\sigma) & \text{if $\sigma \in \ell_f(\clint{0,T})$} 	\\
     										\\
    g_{(f(t-),f(t))}\left(\dfrac{\sigma - \ell_f(t-)}{\ell_f(t) - \ell_f(t-)}\right) 
       & \text{if $\sigma \in \cldxint{\ell_f(t-),\ell_f(t)}$, $t \in \discont(f)$}
  \end{cases},
\end{equation}
then 
\begin{gather}
   f = \ftilde \circ \ell_f, \label{u = utilde o ell} \\
   \pV(\ftilde,\clint{0,T}) = \pV(f,\clint{0,T}), \\
   \Lipcost(\ftilde) = \Lipcost(F) \le \frac{\V(f,\clint{0,T})}{T}, \label{Lip(ftilde) = Lip(F)}
\end{gather}
and
\begin{equation}\label{image of utilde}
  \ftilde(\clint{0,T}) 
    = f(\clint{0,T}) \bigcup \left( \bigcup_{t \in \discont(f)} g_{(f(t-),f(t))}([0,1]) \right). \notag
\end{equation}
\end{itemize}
\end{Prop}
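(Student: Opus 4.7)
The plan is to treat $\ell_f$ as a rescaling of the classical scalar variation function $t\mapsto\V(f,\clint{0,t})$ and to deduce (i) entirely from standard $\BV$-theory. Monotonicity of $\ell_f$ is immediate; right continuity follows from the well-known fact that for right-continuous $f\in\BV$ the variation function $t\mapsto\V(f,\clint{0,t})$ is right continuous (decompose $\V(f,\clint{0,t+\eps})=\V(f,\clint{0,t})+\V(f,\clint{t,t+\eps})$ and let $\eps\downarrow 0$). The standard $\BV$ left-jump identity gives
\[
\ell_f(t)-\ell_f(t-)=\frac{T}{\V(f,\clint{0,T})}\,\d(f(t-),f(t)),
\]
so $\discont(\ell_f)=\discont(f)$, and the image formula \eqref{ell(0,T)=} follows from a straightforward analysis of the image of a monotone right-continuous function. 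For the existence of $F$, the key observation is that $\ell_f(t_1)=\ell_f(t_2)$ with $t_1<t_2$ forces $\V(f,\clint{t_1,t_2})=0$, so $f$ is constant on $\clint{t_1,t_2}$; this makes $F(\sigma):=f(t)$ (any $t$ with $\ell_f(t)=\sigma$) well defined, and $\d(f(t_1),f(t_2))\le \V(f,\clint{t_1,t_2})=(\V(f,\clint{0,T})/T)(\sigma_2-\sigma_1)$ for $\sigma_i=\ell_f(t_i)$ yields the Lipschitz bound.

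For (ii), the definition \eqref{reparametrization w.r.t. F} is consistent at the overlap point $\sigma=\ell_f(t)$ since both branches return $f(t)=g_{(f(t-),f(t))}(1)$, and $f=\ftilde\circ\ell_f$ is then automatic. On each jump interval $\cldxint{\ell_f(t-),\ell_f(t)}$ the linearly reparametrized geodesic has Lipschitz constant
\[
\frac{\d(f(t-),f(t))}{\ell_f(t)-\ell_f(t-)}=\frac{\V(f,\clint{0,T})}{T},
\]
matching the bound on $F$, and contributes exactly $\d(f(t-),f(t))$ to the total variation. Combining these contributions with the variation of $F$ over $\ell_f(\clint{0,T})$ (which captures the part of $\V(f,\clint{0,T})$ off the jumps) gives $\pV(\ftilde,\clint{0,T})=\pV(f,\clint{0,T})$, and the image description follows at once.

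The hard part is promoting the piecewise Lipschitz bound to a \emph{global} Lipschitz bound on $\ftilde:\clint{0,T}\function\X$, since the countably many jump intervals may accumulate and a finite telescoping argument is not available. I would handle this by first verifying that $\ftilde$ is continuous at every left endpoint $\ell_f(t-)$ of a jump interval (the value $g_{(f(t-),f(t))}(0)=f(t-)$ matches the left limit of $F$ through the $\V$-jump identity), and then, for arbitrary $\sigma_1<\sigma_2$ in $\clint{0,T}$, bounding $\d(\ftilde(\sigma_1),\ftilde(\sigma_2))$ by $\pV(\ftilde,\clint{\sigma_1,\sigma_2})$, which by the additive decomposition above is controlled by $(\V(f,\clint{0,T})/T)(\sigma_2-\sigma_1)$.
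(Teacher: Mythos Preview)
The paper does not prove this proposition in situ: it is quoted as \cite[Proposition 5.1]{Rec16}, so there is no local proof to compare against. Your outline is the natural one and is essentially what such a proof must look like. The reduction of (i) to standard $\BV$ facts about the variation function, the jump identity $\ell_f(t)-\ell_f(t-)=(T/\V(f,\clint{0,T}))\,\d(f(t-),f(t))$, and the ``constant on fibres of $\ell_f$'' argument for the well-definedness of $F$ are exactly right. Your strategy for the global Lipschitz bound---first establishing continuity of $\ftilde$ at the left endpoints $\ell_f(t-)$, then invoking $\d(\ftilde(\sigma_1),\ftilde(\sigma_2))\le\pV(\ftilde,\clint{\sigma_1,\sigma_2})$ and controlling the right-hand side additively---is a clean way to handle the possible accumulation of jump intervals without a finite telescoping.

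One point deserves care. You assert that on each jump interval the reparametrized geodesic has Lipschitz constant exactly $\d(f(t-),f(t))/(\ell_f(t)-\ell_f(t-))=\V(f,\clint{0,T})/T$. This presupposes that each $g_{(x,y)}$ is \emph{constant-speed}, i.e.\ $\Lipcost(g_{(x,y)})=\d(x,y)$. The paper's definition of geodesic only requires $g\in\Lip(\clint{0,1};\X)$ with $\pV(g,\clint{0,1})=\d(x,y)$; this forces $\d(g(s),g(t))=\pV(g,\clint{s,t})$ for all $s\le t$ but does \emph{not} force $\pV(g,\clint{s,t})=(t-s)\d(x,y)$ (e.g.\ $g(t)=t^2$ on $\clint{0,1}$ in $\ar$ has $\pV(g,\clint{0,1})=1=\d(g(0),g(1))$ but $\Lipcost(g)=2$). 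Without constant speed the equality $\Lipcost(\ftilde)=\Lipcost(F)$ in \eqref{Lip(ftilde) = Lip(F)} can fail. In both concrete families the paper actually uses---the segments $(1-t)x+ty$ in $\H$ and the sets $\G_{(\A,\B)}$ in $\Conv_\H$---the geodesics \emph{are} constant-speed, so the gap is harmless for the applications; but in a self-contained write-up you should either add the constant-speed hypothesis explicitly or verify it for the family $\mathscr{G}$ at hand.
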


\subsection{The Hilbert case}

Let us consider Proposition \ref{P:reparametrization} with $X = \H$. In this case the family 
$\mathscr{G} = (g_{(x,y)})_{(x,y) \in \Phi_\H}$ is defined a fortiori by $g_{(x,y)}(t) := (1-t)x + ty$, $t \in \clint{0,1}$. Therefore for every $u \in \BV^\r(\clint{0,T};\H)$ there exists a unique $\utilde \in \Lip(\clint{0,T};\H)$ such that 
$\Lipcost(\utilde,\clint{0,T}) \le \V(u,\clint{0,T})/T$ and 
\begin{align}
  & u = \utilde \circ \ell_u \label{reparametrization1} \\ 
  & \utilde(\ell_u(t-)(1-\lambda) + \ell_u(t)\lambda) =  
    (1-\lambda)u(t-) + \lambda u(t) 
     \qquad \forall t \in \clint{0,T},\ \forall \lambda \in \clint{0,1}. \label{reparametrization2}
\end{align}

Moreover from \cite[Lemma 4.3, Proposition 4.10]{Rec11a} we immediately infer the following

\begin{Prop}\label{P:vntilde -> vtilde}
For $u \in \BV^\r(\clint{0,T};\H)$ we have that
\begin{equation}\label{|utilde'| costante}
  \norm{\utilde'(\sigma)}{\H} = \frac{\V(u,\clint{0,T})}{T}   
  \qquad \text{for $\leb^1$-a.e. $\sigma \in \clint{0,T}$}.
\end{equation}
If $u_n \in \BV^\r(\clint{0,T};\H)$ for every $n \in \en$ and $\d_{us}(u_n,u) \to 0$ then $\utilde_n \to \utilde$ in 
$\W^{1,p}(0,T;\H)$ for every $p \in \clsxint{1,\infty}$.
\end{Prop}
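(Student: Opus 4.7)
For the first identity, the plan is to combine the Lipschitz bound with the variation identity from Proposition \ref{P:reparametrization}(ii). Since $\utilde \in \Lip(\clint{0,T};\H)$ with $\Lipcost(\utilde) \le \V(u,\clint{0,T})/T$, we have $\norm{\utilde'(\sigma)}{\H} \le \V(u,\clint{0,T})/T$ for $\leb^1$-a.e.\ $\sigma$. On the other hand, $\utilde$ is absolutely continuous, so $\V(\utilde,\clint{0,T}) = \int_0^T \norm{\utilde'(\sigma)}{\H}\,\de\sigma$, while Proposition \ref{P:reparametrization}(ii) also furnishes $\V(\utilde,\clint{0,T}) = \V(u,\clint{0,T})$. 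Integrating the pointwise inequality and comparing with this integral identity forces equality a.e., giving the claim.

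For the second statement I would argue in two stages. First, I would establish uniform convergence $\utilde_n \to \utilde$ on $\clint{0,T}$. The assumption $\d_{us}(u_n,u) \to 0$ provides $u_n \to u$ uniformly and $\V(u_n,\clint{0,T}) \to \V(u,\clint{0,T})$; a standard subadditivity argument (using $\V(u_n,\clint{0,t}) + \V(u_n,\clint{t,T}) = \V(u_n,\clint{0,T})$ together with lower semicontinuity of the variation under uniform convergence) then yields $\V(u_n,\clint{0,t}) \to \V(u,\clint{0,t})$ for every $t \in \cont(u)$, and hence $\ell_{u_n}(t) \to \ell_u(t)$ for such $t$. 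The family $(\utilde_n)$ is equibounded and uniformly Lipschitz, so Arzela-Ascoli extracts a subsequence $\utilde_{n_k}$ converging uniformly to some $v \in \Lip(\clint{0,T};\H)$. To identify $v$ with $\utilde$, I pick $\sigma \in \ell_u(\cont(u))$ and $t \in \cont(u)$ with $\ell_u(t) = \sigma$; then $\utilde_{n_k}(\ell_{u_{n_k}}(t)) = u_{n_k}(t) \to u(t) = \utilde(\sigma)$, while the uniform equicontinuity together with $\ell_{u_{n_k}}(t) \to \sigma$ yields $\utilde_{n_k}(\ell_{u_{n_k}}(t)) \to v(\sigma)$. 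Thus $v = \utilde$ on the dense subset $\ell_u(\cont(u))$ of $\ell_u(\clint{0,T})$, and on each complementary jump interval both $v$ and $\utilde$ coincide with the affine interpolant dictated by \eqref{reparametrization2}, so $v = \utilde$ throughout $\clint{0,T}$; the usual subsequence principle upgrades this to convergence of the full sequence.

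For the $\W^{1,p}$-convergence of the derivatives I would exploit the fact, just proved, that $\norm{\utilde_n'(\sigma)}{\H}$ is $\leb^1$-a.e.\ equal to the constant $c_n := \V(u_n,\clint{0,T})/T$, which converges to $c := \V(u,\clint{0,T})/T$. The sequence $(\utilde_n')$ is bounded in $\L^\infty(0,T;\H)$, and the uniform convergence $\utilde_n \to \utilde$ forces any weak-$*$ cluster point of $(\utilde_n')$ to equal $\utilde'$, so $\utilde_n' \convergedeb \utilde'$ weakly in $\L^2(0,T;\H)$. Expanding
\[
  \int_0^T \norm{\utilde_n'(\sigma) - \utilde'(\sigma)}{\H}^2\,\de\sigma = Tc_n^2 + Tc^2 - 2\int_0^T \duality{\utilde_n'(\sigma)}{\utilde'(\sigma)}\,\de\sigma
\]
and passing to the limit yields $\utilde_n' \to \utilde'$ strongly in $\L^2(0,T;\H)$; equiboundedness in $\L^\infty$ together with Hölder interpolation then delivers the convergence in $\L^p(0,T;\H)$ for every $p \in \clsxint{1,\infty}$. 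The main obstacle of the proof is clearly Stage 1: transferring strict convergence in $\BV$ to pointwise convergence of the cumulative variations (and therefore of the length reparametrizations $\ell_{u_n}$) at continuity points of $u$, which is the crux enabling one to identify the uniform limit via the density of $\ell_u(\cont(u))$ in $\ell_u(\clint{0,T})$.
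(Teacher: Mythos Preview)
The paper does not give a self-contained proof here; it simply records that the proposition follows from \cite[Lemma 4.3, Proposition 4.10]{Rec11a}. So there is no detailed argument in the paper to compare against, and your outline has to stand on its own. Your proof of \eqref{|utilde'| costante} is correct, and Stage~2 of your argument (weak convergence of derivatives plus convergence of $\L^2$-norms, then interpolation) is sound.

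Stage~1 has a genuine gap. You invoke Arzel\`a--Ascoli to extract a uniformly convergent subsequence of $(\utilde_n)$, but the paper explicitly works in an \emph{arbitrary} Hilbert space $\H$ (no separability, possibly infinite dimensional), and in that generality equiboundedness plus equicontinuity is insufficient: one also needs pointwise relative compactness of $\{\utilde_n(\sigma):n\in\en\}$ in $\H$, which you have not established. The argument can in fact be rearranged to avoid compactness altogether---for $\sigma=\ell_u(t)$ with $t\in\cont(u)$ the estimate $\norm{\utilde_n(\sigma)-\utilde(\sigma)}{}\le \Lipcost(\utilde_n)\,|\sigma-\ell_{u_n}(t)|+\norm{u_n(t)-u(t)}{}$ already gives $\utilde_n(\sigma)\to\utilde(\sigma)$ directly, and equicontinuity then yields uniform convergence on the closure of $\ell_u(\cont(u))$---but that closure need not be all of $\clint{0,T}$, since the open jump intervals $\opint{\ell_u(t-),\ell_u(t)}$ may be missing. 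There a second gap appears: your assertion that ``on each complementary jump interval both $v$ and $\utilde$ coincide with the affine interpolant'' is unjustified for the limit, because $\utilde_n$ is affine on the jump intervals of $u_n$, not of $u$. A correct completion uses that the endpoints $\ell_u(t-),\ell_u(t)$ lie in the closure of $\ell_u(\cont(u))$ (so convergence holds there), and then that $\norm{\utilde(\ell_u(t))-\utilde(\ell_u(t-))}{}$ \emph{saturates} the Lipschitz bound $(\ell_u(t)-\ell_u(t-))\V(u,\clint{0,T})/T$ while $\Lipcost(\utilde_n)\to\V(u,\clint{0,T})/T$; a short geometric argument in $\H$ then forces $\utilde_n$ to converge to the segment on $\clint{\ell_u(t-),\ell_u(t)}$ as well.
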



\subsection{Reparametrization of ``convex-valued'' curves}\label{S:integral formulation}

Let us {}{consider } the situation of Proposition \ref{P:reparametrization} in the case when $X = \Conv_\H$, and the family 
$\mathscr{G} = \left( \G_{(\A,\B)}\right)$ is defined by
\begin{equation}\label{catching-up geodesic-2}
  \G_{(\A,\B)}(t) := (\A + D_{t\rho}) \cap (\B + D_{(1-t)\rho}), \quad t \in \clint{0,1}, \quad 
  \rho := \d_\hausd(\A,\B) < \infty.
\end{equation}
The mapping $\G_{(\A,\B)}$ is actually a geodesic in $\Conv_\H$ (cf. \cite[Proposition 4.1]{Rec16} and 
\cite[Theorem 1]{Ser98}), therefore if $\C \in \BV^\r(\clint{0,T};\Conv_\H)$, then {}{there} exists a unique 
$\Ctilde \in \Lip(\clint{0,T};\Conv_\H)$ such that $\Lipcost(\Ctilde,\clint{0,T})$ $\le$ $\V(\C,\clint{0,T})/T$ and 
\begin{align}
  & \C = \Ctilde \circ \ell_\C \label{reparametrization1-Conv} \\ 
  & \Ctilde(\ell_\C(t-)(1-\lambda) + \ell_\C(t)\lambda)  =  
     (\C(t-) + D_{\lambda \rho_t}) \cap (\C(t) + D_{(1-\lambda)\rho_t}) \notag \\
  & \phantom{\ \ \ \ \ \ \ \ \ \ \ \ \ \ \ \ }
     \qquad \forall t \in \clint{0,T},\ \forall \lambda \in \clint{0,1}, \text{ with $\rho_t := \d_\hausd(\C(t-),\C(t))$}. \label{reparametrization2-Conv} 
\end{align}

Moreover the following Proposition is proved in \cite[Corollary 5.1]{Rec16}.

\begin{Prop}\label{Cn -> C => Ctilden -> Ctilde}
If $\C, \C_n \in \BV^\r(\clint{0,T};\Conv_\H)$ for every $n \in \en$ and $\d_{us}(\C_n,\C) \to 0$ as $n \to \infty$, then 
$\d_{us}(\Ctilde_n,\Ctilde) \to 0$ as $n \to \infty$.
\end{Prop}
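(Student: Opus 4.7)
The hypothesis $\d_{us}(\C_n,\C) \to 0$ unwinds as $\d_\infty(\C_n, \C) \to 0$ together with $\V(\C_n,\clint{0,T}) \to \V(\C,\clint{0,T})$. By Proposition \ref{P:reparametrization}(ii) we have $\V(\Ctilde_n,\clint{0,T}) = \V(\C_n,\clint{0,T})$ and $\V(\Ctilde,\clint{0,T}) = \V(\C,\clint{0,T})$, so the variation half of $\d_{us}(\Ctilde_n, \Ctilde) \to 0$ is automatic, and the real task is to prove uniform convergence $\sup_{\sigma \in \clint{0,T}} \d_\hausd(\Ctilde_n(\sigma), \Ctilde(\sigma)) \to 0$. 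The overall plan mimics Proposition \ref{P:vntilde -> vtilde} for the Hilbert-valued case: exploit equi-Lipschitz continuity of the reparametrizations, combine it with pointwise convergence of the length functions, and use uniqueness of the geodesic reparametrization to identify the limit.

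First I would analyze the length functions. Set $V_n(t) := \V(\C_n,\clint{0,t})$ and $V(t) := \V(\C,\clint{0,t})$. Lower semicontinuity of the variation applied on both $\clint{0,t}$ and $\clint{t,T}$, together with $V_n(T) \to V(T)$, forces $V_n(t) \to V(t)$ for every $t \in \clint{0,T}$, whence $\ell_{\C_n}(t) \to \ell_\C(t)$ pointwise on $\clint{0,T}$, and by a standard Dini-type argument for monotone functions the convergence is uniform on compact subsets of $\cont(\ell_\C)$. Next, using the equi-Lipschitz bound $\Lipcost(\Ctilde_n) \le V_n(T)/T$ from Proposition \ref{P:reparametrization}(ii), the identity $\Ctilde_n \circ \ell_{\C_n} = \C_n$, and the uniform convergence $\d_\hausd(\C_n(\cdot),\C(\cdot)) \to 0$, I would establish pointwise convergence $\Ctilde_n(\sigma) \to \Ctilde(\sigma)$ on the ``regular'' set $\ell_\C(\cont(\ell_\C))$: picking $t$ with $\ell_\C(t) = \sigma$, one has $\Ctilde_n(\ell_{\C_n}(t)) = \C_n(t) \to \C(t) = \Ctilde(\sigma)$, while $|\sigma - \ell_{\C_n}(t)| \to 0$ kills the equi-Lipschitz correction.

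The main obstacle is extending this to $\sigma$ inside a jump slot $\cldxint{\ell_\C(t-),\ell_\C(t)}$ of $\ell_\C$. Writing $\sigma = (1-\lambda)\ell_\C(t-) + \lambda\ell_\C(t)$, the target $\Ctilde(\sigma)$ equals the intersection $(\C(t-) + D_{\lambda\rho_t}) \cap (\C(t) + D_{(1-\lambda)\rho_t})$ by \eqref{reparametrization2-Conv}, but the approximating sequence may fill in the jump of $\C$ at $t$ by a fast continuous piece of $\C_n$, or by many small jumps, so that $\Ctilde_n$ on the corresponding $\sigma$-interval is in general not given by the clean geodesic formula \eqref{catching-up geodesic-2}. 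The control must come from combining (i) the pointwise convergence $V_n \to V$, so the cumulative variation of $\C_n$ across any subinterval tracks that of $\C$; (ii) the equi-Lipschitz continuity of the $\Ctilde_n$; and (iii) the Hausdorff-continuity of the map $(\A,\B,\lambda) \mapsto (\A + D_{\lambda\d_\hausd(\A,\B)}) \cap (\B + D_{(1-\lambda)\d_\hausd(\A,\B)})$ in all its arguments. Together these force the limiting ``geodesic parameter'' inside the jump to be the correct $\lambda$ and the limiting set to be the correct intersection. A standard $\eps/3$-argument — covering $\clint{0,T}$ by finitely many $\sigma$-intervals, using the equi-Lipschitz bound to control values on each interval from its endpoints, and invoking the already-established pointwise convergence at the endpoints — then upgrades the pointwise convergence to the required uniform convergence, completing the proof.
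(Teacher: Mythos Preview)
The paper does not prove this proposition; it simply cites \cite[Corollary 5.1]{Rec16}. So there is no in-paper argument to compare against, and the question is whether your sketch stands on its own.

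Your outline is sound for the variation half and for pointwise convergence of $\Ctilde_n$ on the set $\ell_\C(\cont(\ell_\C))$; the upgrade from pointwise to uniform via equi-Lipschitz continuity is also routine. The genuine gap is the jump-slot step. From (i) and (ii) you do obtain
\[
  \limsup_{n\to\infty}\d_\hausd(\Ctilde_n(\sigma),\C(t-))\le\lambda\rho_t,
  \qquad
  \limsup_{n\to\infty}\d_\hausd(\Ctilde_n(\sigma),\C(t))\le(1-\lambda)\rho_t,
\]
which says that $\Ctilde_n(\sigma)$ asymptotically sits on \emph{some} constant-speed geodesic from $\C(t-)$ to $\C(t)$. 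But geodesics in $(\Conv_\H,\d_\hausd)$ are \emph{not} unique: already for singletons $\A=\{a\}$, $\B=\{b\}$ the segment $\lambda\mapsto\{(1-\lambda)a+\lambda b\}$ and the lens $\lambda\mapsto(\A+D_{\lambda\rho})\cap(\B+D_{(1-\lambda)\rho})$ are distinct geodesics. So the two distance constraints alone do not force $\Ctilde_n(\sigma)\to\G_{(\C(t-),\C(t))}(\lambda)$. Concretely, they give the inclusion $\Ctilde_n(\sigma)\subseteq(\C(t-)+D_{\lambda\rho_t+\eps_n})\cap(\C(t)+D_{(1-\lambda)\rho_t+\eps_n})$, which controls one half of $\d_\hausd(\Ctilde_n(\sigma),\Ctilde(\sigma))$, but the reverse inclusion $\Ctilde(\sigma)\subseteq\Ctilde_n(\sigma)+D_{o(1)}$ does not follow.

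Your item (iii), continuity of $(\A,\B,\lambda)\mapsto\G_{(\A,\B)}(\lambda)$, does not rescue this: $\Ctilde_n(\sigma)$ need not be a value of that map with arguments close to $(\C(t-),\C(t),\lambda)$. Across the jump slot of $\C$ the curve $\C_n$ may be continuous, or have many small jumps elsewhere, so $\Ctilde_n(\sigma)$ is generically either $\C_n(s)$ for some $s$, or the intersection-geodesic value for a jump of $\C_n$ of much smaller amplitude --- in neither case an instance of $\G_{(\A',\B')}(\mu)$ with $(\A',\B')$ near $(\C(t-),\C(t))$. Closing this gap requires an argument genuinely tied to the \emph{specific} ``maximal'' intersection geodesic \eqref{catching-up geodesic-2}, and that is presumably what the cited result in \cite{Rec16} supplies.
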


The family of geodesics (\ref{catching-up geodesic-2}) is studied in \cite{Rec16} in connection with sweeping processes. Indeed in \cite[Theorem 6.1]{Rec16} the following result is proved.

\begin{Thm}\label{T:Lip-reduction of BVsweep}
If $y_0 \in \H$ then $\M(y_0,\C) = \M(y_0, \Ctilde) \circ \ell_\C$ for every $\C \in \BV^\r(\clint{0,T};\Conv_\H)$.
\end{Thm}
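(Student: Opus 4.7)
My plan is to set $z := \M(y_0,\Ctilde) \circ \ell_\C$ and to verify that $z$ satisfies all of the defining conditions \eqref{y in C - BVsweep}--\eqref{in. cond. - BVsweep} of the $\BV$ sweeping process driven by $\C$ with initial condition $y_0$; then the uniqueness part of Theorem \ref{T:existence general BVsweep} will force $z = \M(y_0,\C)$. Writing $\tilde z := \M(y_0,\Ctilde)$, Theorem \ref{T:existence general BVsweep} gives $\tilde z \in \Lip(\clint{0,T};\H)$ with $\tilde z(\sigma) \in \Ctilde(\sigma)$ and $\tilde z(0) = \Proj_{\Ctilde(0)}(y_0)$. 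Since $\ell_\C \in \BV^\r(\clint{0,T};\clint{0,T})$ is nondecreasing, $z \in \BV^\r(\clint{0,T};\H)$; the inclusion $z(t) = \tilde z(\ell_\C(t)) \in \Ctilde(\ell_\C(t)) = \C(t)$ comes straight from \eqref{reparametrization1-Conv}, and $z(0) = \tilde z(0) = \Proj_{\C(0)}(y_0)$ follows from $\Ctilde(0) = \C(0)$.

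The heart of the matter is to produce a measure $\mu$ and a function $v \in \L^1(\mu;\H)$ with $\D z = v\mu$ and $v(t) + \partial I_{\C(t)}(z(t)) \ni 0$ for $\mu$-a.e. $t$. I would choose $\mu := \D \ell_\C$ and invoke the $\BV$ chain rule (Proposition \ref{P:BV chain rule}(ii)) to obtain $\D z = v\,\D \ell_\C$ with
\[
   v(t) = \begin{cases} \tilde z'(\ell_\C(t)) & \text{if } t \in \cont(\ell_\C),\\[0.3ex] \dfrac{\tilde z(\ell_\C(t)) - \tilde z(\ell_\C(t-))}{\ell_\C(t) - \ell_\C(t-)} & \text{if } t \in \discont(\ell_\C), \end{cases}
\]
after fixing a pointwise representative of $\tilde z'$. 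The inclusion must then be verified on two disjoint pieces of $\clint{0,T}$.

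On $\cont(\ell_\C)$ this is a transport argument: the Lipschitz sweeping process inclusion $\tilde z'(\sigma) + N_{\Ctilde(\sigma)}(\tilde z(\sigma)) \ni 0$ holds for $\leb^1$-a.e. $\sigma \in \clint{0,T}$. By Proposition \ref{P:BV chain rule}(i), the restriction of $\D \ell_\C$ to $\cont(\ell_\C)$ is carried by $\ell_\C$ onto $\leb^1$ on $\ell_\C(\cont(\ell_\C))$, so the exceptional $\leb^1$-null set pulls back to a $\D\ell_\C$-null subset of $\cont(\ell_\C)$, giving $v(t) + N_{\C(t)}(z(t)) \ni 0$ for $\mu$-a.e. $t \in \cont(\ell_\C)$.

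The hard part is the jump set $\discont(\ell_\C) = \discont(\C)$, a countable set of positive $\mu$-measure, where the geodesic \eqref{catching-up geodesic-2} must do its work. Fix $t \in \discont(\C)$, write $\sigma_0 := \ell_\C(t-)$, $\sigma_1 := \ell_\C(t)$, $\rho := \rho_t = \d_\hausd(\C(t-),\C(t))$. On $\clint{\sigma_0,\sigma_1}$ the curve $\Ctilde$ traces the catching-up geodesic $\G_{(\C(t-),\C(t))}$, and the Lipschitz sweeping process $\tilde z$ satisfies $\tilde z'(\sigma) + N_{\Ctilde(\sigma)}(\tilde z(\sigma)) \ni 0$ a.e.\ on this interval. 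The key claim, which I would establish using the explicit intersection formula \eqref{reparametrization2-Conv}, is that $\tilde z$ is \emph{affine} on $\clint{\sigma_0,\sigma_1}$ with $\norm{\tilde z'}{} \equiv \V(\C,\clint{0,T})/T$, and its terminal velocity lies in the exterior normal cone to $\C(t)$ at $\tilde z(\sigma_1) = z(t)$. Intuitively, as $\lambda$ moves from $0$ to $1$, the ball-growth component $\C(t-) + D_{\lambda\rho}$ engulfs $\tilde z(\sigma_0) \in \C(t-)$ and forces no motion, while the constraint $\C(t) + D_{(1-\lambda)\rho}$ squeezes $\tilde z$ onto the boundary and pushes it along a straight line into $\C(t)$, with displacement of norm exactly $\rho = \sigma_1-\sigma_0$ times $\V(\C,\clint{0,T})/T$. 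Consequently the difference quotient defining $v(t)$ equals a $\sigma$-value of $\tilde z'$ on a set of positive measure inside $\opint{\sigma_0,\sigma_1}$, and passing to the limit $\sigma \uparrow \sigma_1$ in the inclusion together with upper semicontinuity of the graph of $\partial I_{(\cdot)}(\cdot)$ (or directly using $\tilde z(\sigma_1) \in \C(t)$ and the convexity of $\C(t)$) yields $v(t) + N_{\C(t)}(z(t)) \ni 0$. Combining both parts gives the full inclusion $\mu$-a.e., completing the verification.
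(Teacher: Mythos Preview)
The paper does not supply its own proof of this theorem; it cites \cite[Theorem 6.1]{Rec16}.  Your overall strategy --- set $z := \M(y_0,\Ctilde)\circ\ell_\C$, produce the density $v$ via the chain rule of Proposition~\ref{P:BV chain rule}, verify \eqref{y in C - BVsweep}--\eqref{in. cond. - BVsweep}, and invoke uniqueness --- is the natural one, and the transport argument on $\cont(\ell_\C)$ is correct.

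There is, however, a concrete error in your analysis of the jump set.  You assert that $\tilde z$ is \emph{affine} on $\clint{\sigma_0,\sigma_1}$ with $\norm{\tilde z'}{}\equiv \V(\C,\clint{0,T})/T$, and that the difference quotient $v(t)$ equals a value of $\tilde z'$ on a set of positive measure.  Both claims are false in general.  By Lemma~\ref{L:particular sweeping process} (which the paper states precisely to handle this step), $\tilde z$ is \emph{piecewise} affine on $\clint{\sigma_0,\sigma_1}$: writing $u_0 := \tilde z(\sigma_0)$ and $(1-t_0)\rho = \d(u_0,\C(t))$, the solution is constant on the initial fraction $\clint{0,t_0}$ of the interval and then moves linearly to $\tilde z(\sigma_1) = \Proj_{\C(t)}(u_0)$.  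Consequently
\[
  v(t) \;=\; \frac{\tilde z(\sigma_1) - \tilde z(\sigma_0)}{\sigma_1-\sigma_0}
        \;=\; \frac{\Proj_{\C(t)}(u_0) - u_0}{\sigma_1-\sigma_0}
\]
is only $(1-t_0)$ times the nonzero values of $\tilde z'$ on $\opint{\sigma_0,\sigma_1}$, so your ``$v(t)$ equals a value of $\tilde z'$'' claim and the ensuing limit argument $\sigma\uparrow\sigma_1$ rest on a false premise.  (Your own intuitive description, with its ``no motion'' phase, already contradicts the affinity claim.)

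The repair is immediate once Lemma~\ref{L:particular sweeping process} is invoked explicitly: from the displayed formula, $-v(t)$ is a nonnegative multiple of $u_0 - \Proj_{\C(t)}(u_0)$, and by \eqref{normal cone} this vector lies in $N_{\C(t)}\!\bigl(\Proj_{\C(t)}(u_0)\bigr) = N_{\C(t)}(z(t))$.  No limiting or semicontinuity argument is needed; the normal-cone inclusion at jump points follows directly from the projection characterization.
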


The previous Theorem \ref{T:Lip-reduction of BVsweep} allows us to reduce any $\BV$-sweeping process to a Lipschitz continuous one. In order to study $\M(y_0,\Ctilde)$ we need the following Lemma proved in \cite[Lemma 4.4]{Rec16}.

\begin{Lem}\label{L:particular sweeping process}
Let $\A, \B \in \Conv_\H$ be such that $\d_\hausd(\A,\B) < \infty$ and let $\G_{(\A, \B)} : \clint{0,1} \function \Conv_\H$ be defined by \eqref{catching-up geodesic-2}. For $u_0 \in \A$ let $t_0 \in \clint{0,1}$ be the unique number such that 
$\norm{u_0 - \Proj_\B(u_0)}{} = (1-t_0)\d_\hausd(\A,\B)$. Then
\begin{equation}\label{solution for catching-up geodesic-2}
  \M(u_0, \G_{(\A, \B)})(t) :=
  \begin{cases}
    u_0 & \text{if $t \in \clsxint{0,t_0}$} \\
    u_0 + \dfrac{t-t_0}{1-t_0}(\Proj_{\B}(u_0) - u_0) & \text{if $t \in \clsxint{t_0,1}$} \\
    \Proj_\B(u_0) & \text{if $t = 1$}
  \end{cases}.
\end{equation}
\end{Lem}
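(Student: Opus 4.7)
The plan is to verify directly that the function $y$ defined by the explicit formula on the right-hand side of \eqref{solution for catching-up geodesic-2} satisfies the three characterizing conditions of the Lipschitz sweeping process stated in Theorem \ref{T:existence general BVsweep}, namely the constraint $y(t) \in \G_{(\A,\B)}(t)$, the initial condition $y(0) = \Proj_{\G_{(\A,\B)}(0)}(u_0)$, and the pointwise differential inclusion $y'(t) + N_{\G_{(\A,\B)}(t)}(y(t)) \ni 0$ for $\leb^1$-a.e. $t \in \clint{0,1}$. Since $\G_{(\A,\B)}$ is a geodesic in $(\Conv_\H,\d_\hausd)$, it is Lipschitz, so this characterization applies, and uniqueness will then identify $y = \M(u_0,\G_{(\A,\B)})$.

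First I would observe that $\G_{(\A,\B)}(0) = \A \cap (\B + D_\rho) = \A$, because $\rho = \d_\hausd(\A,\B)$ forces $\A \subseteq \B + D_\rho$; thus $\Proj_{\G_{(\A,\B)}(0)}(u_0) = \Proj_\A(u_0) = u_0 = y(0)$. Next, for the constraint, I would split into $t \in \clsxint{0,t_0}$ and $t \in \cldxint{t_0,1}$. On the first interval $y(t) = u_0 \in \A$, and $\d(u_0,\B) = (1-t_0)\rho \le (1-t)\rho$, so $y(t) \in \G_{(\A,\B)}(t)$. On the second interval, setting $p := \Proj_\B(u_0)$, a direct computation gives $\norm{y(t)-u_0}{} = (t-t_0)\rho \le t\rho$ and, because $y(t)$ lies on the segment from $u_0$ to $p$, $\norm{y(t)-p}{} = (1-t_0)\rho - (t-t_0)\rho = (1-t)\rho$, so both inclusions defining $\G_{(\A,\B)}(t)$ hold.

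The differential inclusion is the heart of the matter. For $t \in \opint{0,t_0}$ we have $y'(t)=0$, which lies trivially in every normal cone. For $t \in \opint{t_0,1}$, set $v := p - u_0$, so that $y'(t) = v/(1-t_0)$ and $\norm{v}{} = (1-t_0)\rho$; we must show $\duality{v}{w - y(t)} \ge 0$ for every $w \in \G_{(\A,\B)}(t)$. Writing $w = b + r$ with $b = \Proj_\B(w)$ and $\norm{r}{} \le (1-t)\rho$, and using that $y(t) - p = -\frac{1-t}{1-t_0}v$, we decompose
\begin{equation}\notag
  \duality{v}{w - y(t)} = \duality{v}{w-b} + \duality{v}{b-p} + \duality{v}{p-y(t)}.
\end{equation}
The second term is $\ge 0$ by the variational inequality characterizing $p = \Proj_\B(u_0)$ (since $-v = u_0 - p$ is an outward normal to $\B$ at $p$); the first term is bounded below by $-\norm{v}{}\norm{w-b}{} \ge -(1-t_0)(1-t)\rho^2$ by Cauchy--Schwarz; the third equals $\frac{1-t}{1-t_0}\norm{v}{}^2 = (1-t)(1-t_0)\rho^2$. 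The first and third exactly cancel, giving $\duality{v}{w-y(t)} \ge 0$.

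The main obstacle is precisely this last verification: one must exploit \emph{both} defining inclusions of $\G_{(\A,\B)}(t)$ (the containment in $\B + D_{(1-t)\rho}$ via Cauchy--Schwarz on the radius $(1-t)\rho$, and the projection property at $p$) in a way that makes the Cauchy--Schwarz inequality tight. The computation works only because $\norm{v}{} = (1-t_0)\rho$ and $\norm{y(t) - p}{} = (1-t)\rho$ are exactly matched to the geometry of the geodesic, so there is no slack. Continuity of $y$ at $t_0$ and at $1$ is immediate from the formula, so $y \in \Lip(\clint{0,1};\H)$, and uniqueness in Theorem \ref{T:existence general BVsweep} concludes the proof.
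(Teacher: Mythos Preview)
Your proposal is correct. The paper itself does not prove this lemma but cites \cite[Lemma~4.4]{Rec16}; your direct verification of the three characterizing conditions from Theorem~\ref{T:existence general BVsweep} (initial condition, constraint, and pointwise differential inclusion) is the natural argument, and each step checks out --- in particular, the decomposition $\duality{v}{w-y(t)} = \duality{v}{w-b} + \duality{v}{b-p} + \duality{v}{p-y(t)}$ with the exact cancellation between the Cauchy--Schwarz bound and the third term is clean and correct.
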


\section{Integral representation for $\P$}\label{S:integral representation}

The reparametrization by the arc length allows to give a simple 

\begin{proof}[Proof of Theorem \ref{thm on P}]
We only have to prove the statements about the integral formulations of $\P$, the remaining assertions following from Theorem \ref{T:existence general BVsweep}. Assume that $y = \P(z_0, u)$, then \eqref{u-y in Z-KrLa}, 
\eqref{initial cond for P KrLa} hold, and there exist a measure $\mu : \borel(\clint{0,T}) \function \clsxint{0,\infty}$ and a function $v \in \L^1(\mu;\H)$ such that $\D y = v \mu$. If $z \in \L^\infty(\mu;\H)$ and $z(\clint{0,T}) \subseteq \Z$ then from \eqref{diff. incl. - BVsweep} it follows that $\duality{z(t) - u(t) + y(t)}{v(t)} \le 0$ for $\mu$-a.e. $t \in \clint{0,T}$. Thus integrating this inequality with respect to $\mu$ we find 
\[
  0 \ge \int_{\clint{0,T}} \duality{z(t) - u(t) + y(t)}{v(t)} \de \mu = \int_{\clint{0,T}} \duality{z(t) - u(t) + y(t)}{\de \D y(t)},
\]
thus \eqref{var in KrLa-test Linfty} and \eqref{var in KrLa} hold. Vice versa let us assume that, $y \in \BV^\r(\clint{0,T};\H)$ satisfies \eqref{u-y in Z-KrLa}--\eqref{initial cond for P KrLa}. Since $y = \widetilde{y} \circ \ell_y$, from Proposition \ref{P:BV chain rule} we get that $\D y = v \D \ell_y,$ where $v : \clint{0,T} \function \H$ is defined by 
$v(t) := \widetilde{y}\ \!'(\ell_y(t))$ for $t \in \cont(y)$ and 
$v(t) := (\widetilde{y}(\ell_y(t)) - \widetilde{y}(\ell_y(t-)))/(\ell_y(t) - \ell_y(t-))$ for $t \in \discont(y)$. Now set 
$C := \{s \in \cont(\ell_y)\ :\ \D\ell_y(\opint{s-h,s+h} \cap \clint{0,T}) \neq 0\ \forall h > 0)\}$ (i.e. $C$ is the set of continuity points of $\ell_y$ which do not lie in the interior of a constancy interval of $\ell_y$) and observe that 
$\lim_{h \searrow 0} \D\ell_y(\opint{s-h,s+h} \cap \clint{0,T}) = \D\ell_y(\{s\}) = 0$ for every $s \in C$. Let us recall that for any Banach space $E$ and any $f \in \L^1(\D\ell_y; E)$ there exists a $\D\ell_y$-zero measure set $Z$ such that $f(\clint{0,T} \setmeno Z)$ is separable (see, e.g., \cite[Property M11, p. 124]{Lan93}), therefore from 
\cite[Corollary 2.9.9., p. 155]{Fed69} it follows that 
\begin{equation}\label{Dl-Lebesgue points}
  \lim_{h \searrow 0} \frac{1}{\ell_y(s+h)-\ell_y(s-h)} \int_{\clsxint{s-h,s+h} \cap \clint{0,T}} \norm{f(t) - f(s)}{E} \de \D\ell_y(t) 
  = 0
\end{equation}
for $\D\ell_y$-a.e. $s\in C$. In \cite{Fed69} the points $s$ satisfying \eqref{Dl-Lebesgue points} are called
\emph{$\D\ell_y$-Lebesgue points of $f$ on $C$ with respect to the Vitali relation 
$V = \{\clsxint{s-h,s+h} \cap C\ ;\ s \in C,\ h > 0\}$}. Let $L$ be the set of $\D\ell_y$-Lebesgue points for both 
$t \longmapsto v(t)$ and $t \longmapsto \duality{u(t) - y(t)}{v(t)}$ on $C$ with respect to $V$, thus 
$\D\ell_y(C \setmeno L) = 0$. Now fix $s \in L$ and $\zeta \in \Z$. A straighforward computation shows that 
\[ 
  \lim_{h \searrow 0}  
  \frac{1}{\ell_y(s+h)-\ell_y(s-h)} \int_{\clsxint{s-h,s+h} \cap \clint{0,T}} \duality{\zeta}{v(t)} \de \D\ell_y(t) = 
  \duality{\zeta}{v(s)}.
\]
Taking $z(t) := \zeta \indicator_{\clsxint{s-h,s+h}}(t) + (u(t) - y(t)) \indicator_{\clint{0,T} \setmeno \clsxint{s-h,s+h}}(t)$ in
\eqref{var in KrLa} for $h > 0$ sufficiently small we get
\[
  \int_{\clsxint{s-h,s+h} \cap \clint{0,T}} \duality{\zeta}{v(t)} \de \D\ell_y(t) \le 
  \int_{\clsxint{s-h,s+h}\cap \clint{0,T}} \duality{u(t) - y(t)}{v(t)}\de \D \ell_y(t)
\] 
Dividing this inequality by $\ell_y(s+h)-\ell_y(s-h)$ and taking the limit as $h \searrow 0$ we get
$\duality{\zeta - u(s) + y(s)}{v(s)} \le 0$, therefore
\begin{equation}\label{diff-incl. on cont points}
  \duality{\zeta - u(s) + y(s)}{v(s)} \le 0 \qquad \text{for $\D\ell_y$-a.e. $s \in C$.}
\end{equation}
Now let $s \in \discont(\ell_y)$ and take 
$z(t) =  \zeta \indicator_{\clsxint{s,s+h}} (t) + (u(t) - y(t)) \indicator_{ \clint{0,T} \setmeno \clsxint{s,s+h} }(t)$ in
\eqref{var in KrLa}: we get
\[
  \int_{\clsxint{s,s+h}} \duality{\zeta - u(t) + y(t)}{v(t)}\de \D\ell_y(t) \le 0
\]
and taking the limit as $h \searrow 0$, by the dominated convergence theorem we infer that
\begin{equation}\notag
  0 \ge \int_{\{s\}} \duality{\zeta - u(t) + y(t)}{v(t)}\de \D\ell_y(t) = \duality{\zeta - u(s) + y(s)}{v(s)} \D\ell_y(\{s\}),
\end{equation}
hence
\begin{equation}\label{diff-incl. on jump points}
  \duality{\zeta - u(s) + y(s)}{v(s)} \le 0 \qquad \forall s \in \discont(\ell_y).
\end{equation}
Collecting together \eqref{diff-incl. on cont points}--\eqref{diff-incl. on jump points} and the fact that 
$\D\ell_y(\cont(\ell_y) \setmeno C) = 0$, we get \eqref{diff. incl. - BVsweep} and we are done.
\end{proof}


\section{Reduction to Lipschitz sweeping processes}\label{S:reduction}

Within this section we consider $u$ $\in$ $\BV^\r(\clint{0,T};\H)$ and the moving convex set $\C_u(t) = u(t) - \Z$, and we study the properties of the sweeping process driven by the reparametrized curve $\Ctilde_u \in \Lip(\clint{0,T};\Conv_\H)$. In this way we will be able to get information on the play operator thanks to the formula 
$\P(z_0, u) = \M(u(0) - z_0, \C_u) = \M(u(0) - z_0, \Ctilde) \circ \ell_\C$. It is useful to introduce the operators
\[
  \Stop : \Z \times \BV^\r(\clint{0,T};\H) \function \BV^\r(\clint{0,T};\H), \qquad 
  \Q : \Z \times \BV^\r(\clint{0,T};\H) \function \BV^\r(\clint{0,T};\H),
\]
defined by 
\begin{equation}\label{def. of S and Q}
  \Stop(z_0, u) := u - \P(z_0, u), \quad \Q(z_0, u) := \P(z_0, u) - \Stop(z_0, u), \qquad u \in \BV^\r(\clint{0,T};\H).
\end{equation}
In the regular case, the derivatives of these operators have a useful geometric interpretation, indeed if $z_0 \in \Z$ and
$u \in \W^{1,1}(0,T;\H)$ then it is easily seen (cf. \cite[Proposition 3.9, p. 33]{Kre97}) that 
$\duality{(\Stop(z_0, u))'}{(\Q(z_0, u))'} = 0$ for $\leb^1$-a.e. $t \in \clint{0,T}$, hence $(\Q(z_0, u))'(t)$ and $u'(t)$ are the diagonals of the rectangle with sides $(\Stop(z_0, u))'(t)$ and $(\P(z_0, u))'(t)$: it follows that 
$\norm{(\Q(z_0, u))'(t)}{} = \norm{u'(t)}{}$ for $\leb^1$-a.e., $t \in \clint{0,T}$ and this is a fundamental fact in the proof of the $\BV$-continuity of the play  operator in $\W^{1,1}(\clint{0,T};\H)$. Such relation makes no sense in the $\BV$ framework, but we will see that the operators $\Stop$ and $\Q$ still play a role.

\begin{Lem}
Let $\C_u$ be defined by \eqref{def Cu} for every $u \in \BV^\r(\clint{0,T};\H)$, and 
$\Q : \Z \times \BV^\r(\clint{0,T};\H)$ $\function$ $\BV^\r(\clint{0,T};\H)$ by \eqref{def. of S and Q}, i.e.
\begin{equation}
  \Q(z_0, u) := 2\P(z_0, u) - u, \qquad z_0 \in \Z, \ u \in \BV^\r(\clint{0,T};\H).
\end{equation}
Then
\begin{equation}\label{ell u = ell Cu}
  \V(u,\clint{0,T}) = \V(\C_u,\clint{0,T}), \qquad \ell_u = \ell_{\C_u} \qquad \forall u \in \BV^\r(\clint{0,T};\H),
\end{equation}
and 
\begin{equation}
  \Q(z_0, u) = (2\M(u(0) - z_0, \Ctilde_u) - \utilde) \circ \ell_u \qquad \forall u \in \BV^\r(\clint{0,T};\H).
\end{equation}
\end{Lem}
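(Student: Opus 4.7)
The lemma amounts to two claims, and the second follows almost immediately from the first together with results already established in the paper. The plan is therefore to concentrate on the first identity and then assemble the composition formula.

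For the equality $\V(u,\clint{0,T}) = \V(\C_u,\clint{0,T})$ and $\ell_u = \ell_{\C_u}$, the heart of the matter is the pointwise Hausdorff identity
\[
  \d_\hausd(u(s) - \Z, u(t) - \Z) = \norm{u(s) - u(t)}{}, \qquad s, t \in \clint{0,T}.
\]
The inequality $\le$ is straightforward: given $a = u(s) - z \in \C_u(s)$, the point $u(t) - z$ belongs to $\C_u(t)$ and is at distance exactly $\norm{u(s) - u(t)}{}$ from $a$, so $\sup_{a \in \C_u(s)} d(a,\C_u(t)) \le \norm{u(s) - u(t)}{}$, and symmetrically for the other side. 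The inequality $\ge$ is the content of the classical fact that the Hausdorff distance between a closed convex set and its translate equals the norm of the translation vector, which can be proved either directly or through the support-function characterization
\[
  \d_\hausd(\Z,\Z + w) = \sup_{\norm{p}{} = 1} |h_\Z(p) - h_{\Z+w}(p)| = \sup_{\norm{p}{}=1} |\duality{p}{w}| = \norm{w}{}.
\]
Once this identity is established, summing over any partition $0 = t_0 < \cdots < t_m = t$ of $\clint{0,t}$ shows that each Hausdorff-partial-sum for $\C_u$ coincides with the corresponding Euclidean partial sum for $u$. Taking suprema gives $\pV(\C_u,\clint{0,t}) = \pV(u,\clint{0,t})$ for every $t \in \clint{0,T}$, yielding \eqref{ell u = ell Cu} directly from the definitions of $\ell_u$ and $\ell_{\C_u}$.

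For the composition formula, I combine three ingredients. By definition \eqref{def. play}, $\P(z_0, u) = \M(u(0) - z_0, \C_u)$. Theorem~\ref{T:Lip-reduction of BVsweep} applied to $\C_u \in \BV^\r(\clint{0,T};\Conv_\H)$ gives $\M(u(0) - z_0, \C_u) = \M(u(0) - z_0, \Ctilde_u) \circ \ell_{\C_u}$, and by the first part this equals $\M(u(0) - z_0, \Ctilde_u) \circ \ell_u$. Using the Hilbert-space reparametrization identity $u = \utilde \circ \ell_u$ from \eqref{reparametrization1}, I obtain
\[
  \Q(z_0, u) = 2\P(z_0, u) - u
  = 2\,\M(u(0) - z_0, \Ctilde_u) \circ \ell_u - \utilde \circ \ell_u
  = \bigl(2\M(u(0) - z_0, \Ctilde_u) - \utilde\bigr) \circ \ell_u.
\]

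The main technical point is the Hausdorff identity for unbounded closed convex $\Z$: the support-function argument has to be applied with care since $h_\Z$ may take the value $+\infty$, but the crucial observation is that the difference $h_{\Z+w}(p) - h_\Z(p) = \duality{p}{w}$ is well defined on the effective domain of $h_\Z$ and the supremum over that domain still yields $\norm{w}{}$ by density/convexity considerations. Everything else in the proof is bookkeeping: an appeal to Theorem~\ref{T:Lip-reduction of BVsweep}, the reparametrization of Proposition~\ref{P:reparametrization} in the Hilbert case, and the definitions of $\P$ and $\Q$.
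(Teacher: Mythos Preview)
Your argument follows the paper's exactly: the paper simply asserts that $\d_\hausd(u(t)-\Z,\,u(s)-\Z) = \norm{u(t)-u(s)}{}$ and then chains the definition of $\P$, Theorem~\ref{T:Lip-reduction of BVsweep}, and the factorization $u = \utilde \circ \ell_u$ precisely as you do. Structurally you are aligned with the source, and the composition formula is obtained correctly once the Hausdorff identity is taken for granted.

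The one genuine gap is in your justification of the $\ge$ direction of that identity for unbounded $\Z$. Your closing remark that ``the supremum over that domain still yields $\norm{w}{}$ by density/convexity considerations'' is not correct in general: if $\Z$ contains a line in some direction $v \neq 0$ (for example $\Z$ a one-dimensional subspace), then $\Z + v = \Z$, the effective domain of $h_\Z$ lies in $v^\perp$, and the supremum of $|\duality{p}{v}|$ over unit vectors $p$ in that domain is $0$; correspondingly $\d_\hausd(\Z,\Z+v) = 0 \neq \norm{v}{}$. The paper does not prove the identity either---it states it as a fact---so you are not worse off than the source, but the support-function hand-wave does not patch the hole, and for $\Z$ with nontrivial lineality space the pointwise identity (and hence \eqref{ell u = ell Cu} as written) actually fails.
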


\begin{proof}
Identity \eqref{ell u = ell Cu} follows from the fact that $\d_\hausd(u(t) - \Z, u(s) - \Z) = \norm{u(t) - u(s)}{}$ for every
$t, s \in \clint{0,T}$. If $u \in \BV^\r(\clint{0,T};\H)$ then from \eqref{def. play}, Theorem \ref{T:Lip-reduction of BVsweep}, 
\eqref{reparametrization1}, and \eqref{ell u = ell Cu} we infer that 
\begin{align*}
\Q(z_0, u) 
& = 2\P(z_0, u) - u = 2\M(u(0) - z_0, \C_u) - u = 2\M(u(0) - z_0, \Ctilde_u) \circ \ell_{\C_u} - \utilde \circ \ell_u  \\ 
& = 
2\M(u(0) - z_0, \Ctilde_u) \circ \ell_u - \utilde \circ \ell_u = (2\M(u(0) - z_0, \Ctilde_u) - \utilde) \circ \ell_u.
\end{align*}
\end{proof}

As a consequence, from Proposition \ref{Cn -> C => Ctilden -> Ctilde} we infer the following

\begin{Cor}\label{C-u n -> C-u}
Assume that $u, u_n \in \BV^\r(\clint{0,T};\H)$, $\C_u$ and $\C_{u_n}$ are defined as in \eqref{def Cu} for every 
$n \in \en$. If $\norm{u_n - u}{\BV} \to 0$, then $\d_{us}(\Ctilde_n,\Ctilde) \to 0$ as $n \to \infty$.
\end{Cor}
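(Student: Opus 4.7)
The proof looks short; here is the plan.

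The strategy is to first upgrade the $\BV$-norm convergence $u_n \to u$ to uniform strict convergence of the corresponding convex-valued maps $\C_{u_n} \to \C_u$, and then invoke Proposition \ref{Cn -> C => Ctilden -> Ctilde} to transfer uniform strict convergence to the reparametrized curves $\widetilde{\C}_{u_n}$, $\widetilde{\C}_u$. Nothing more delicate is needed because the arclength reparametrization has already been shown to be continuous in the uniform strict topology.

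To carry this out, I would first note that translation invariance of the Hausdorff distance gives
\begin{equation*}
  \d_\hausd(\C_{u_n}(t),\C_u(t)) = \d_\hausd(u_n(t)-\Z,u(t)-\Z) = \norm{u_n(t)-u(t)}{}
  \qquad \forall t \in \clint{0,T},
\end{equation*}
so that $\d_\infty(\C_{u_n},\C_u) = \norm{u_n-u}{\infty} \to 0$. Next, by \eqref{ell u = ell Cu} we have $\V(\C_{u_n},\clint{0,T}) = \V(u_n,\clint{0,T})$ and $\V(\C_u,\clint{0,T}) = \V(u,\clint{0,T})$, and since the functional $f \mapsto \V(f,\clint{0,T})$ is subadditive on $\BV^\r(\clint{0,T};\H)$ one has
\begin{equation*}
  |\V(u_n,\clint{0,T}) - \V(u,\clint{0,T})| \le \V(u_n - u,\clint{0,T}) \le \norm{u_n-u}{\BV} \to 0.
\end{equation*}
Combining the two pieces yields $\d_{us}(\C_{u_n},\C_u) \to 0$.

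At this point Proposition \ref{Cn -> C => Ctilden -> Ctilde}, applied to the sequence $\C_{u_n}$ and its limit $\C_u$, directly delivers $\d_{us}(\widetilde{\C}_{u_n},\widetilde{\C}_u) \to 0$, which is the claim. I do not foresee a genuine obstacle here: the $\BV$-norm hypothesis is in fact stronger than what is needed (uniform strict convergence of $u_n$ to $u$ would already suffice, in view of translation invariance of $\d_\hausd$ and \eqref{ell u = ell Cu}), so the corollary is essentially a repackaging of Proposition \ref{Cn -> C => Ctilden -> Ctilde} through the isometric identification $u \mapsto \C_u$ between inputs and their associated moving sets.
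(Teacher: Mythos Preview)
Your proposal is correct and follows exactly the approach the paper intends: the paper states the corollary immediately after establishing \eqref{ell u = ell Cu} with the words ``As a consequence, from Proposition \ref{Cn -> C => Ctilden -> Ctilde} we infer the following'', leaving the details implicit. You have simply spelled out those details (translation invariance of $\d_\hausd$ giving $\d_\infty(\C_{u_n},\C_u)=\norm{u_n-u}{\infty}$, then \eqref{ell u = ell Cu} and subadditivity of variation giving convergence of the variations, hence $\d_{us}(\C_{u_n},\C_u)\to 0$, and finally Proposition \ref{Cn -> C => Ctilden -> Ctilde}).
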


\begin{Lem}\label{|what'| = cost}
Assume that $u \in \BV^\r(\clint{0,T};\H)$, $\C_u$ is defined by \eqref{def Cu}, $z_0 \in \Z$, and set $y_0 := u(0) - z_0$. If
\begin{equation}
  w := \Q(z_0, u) := 2\P(z_0, u) - u
\end{equation}
and
\begin{equation}\label{def what}
  \what := 2\M(y_0, \Ctilde_u) - \utilde,
\end{equation}
then there exists a function $\hat{v}_w \in \L^{\infty}(0,T;\H)$ such that
\begin{itemize}
\item[(a)]
  $\hat{v}_w$ is a Lebesgue representative of $\what'$;
\item[(b)]  
  it holds
\begin{equation}\label{norm of what'}
  \norm{\hat{v}_w(\sigma)}{} = \norm{\utilde'(\sigma)}{} = \frac{\V(u,\clint{0,T})}{T}
  \qquad \text{for $\leb^1$-a.e. $\sigma \in \ell_u(\cont(u))$}
\end{equation}
(the case $\leb^1(\ell_u(\cont(u))) = 0$ is not excluded);
\item[(c)] 
  if $g_{w} : \clint{0,T} \function \H$ is defined by
\begin{equation}\label{density of what w.r.t. d ell}
  g_{w}(t) :=
  \begin{cases}
    \dfrac{\what(\ell_u(t)) - \what(\ell_u(t-))}{\ell_u(t) - \ell_u(t-)} & \text{if $t \in \discont(u)$}, \\
    \ \\
    \hat{v}_w(\ell_u(t)) & \text{otherwise},
  \end{cases}
\end{equation}
then
\begin{equation}
 \D w = \D\ \!(\what \circ \ell_u) = g_{w} \D \ell_u,
\end{equation}
i.e. $g_{w}$ is a density of $\D w = \D\ \!(\what \circ \ell_u)$ with respect to $\D\ell_u$.
\end{itemize}
\end{Lem}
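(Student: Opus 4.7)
The underlying observation is that both building blocks of $\what = 2\M(y_0,\Ctilde_u) - \utilde$ are Lipschitz: $\Ctilde_u \in \Lip(\clint{0,T};\Conv_\H)$ by \eqref{reparametrization1-Conv}, so Theorem \ref{T:existence general BVsweep} applied with $p=\infty$ gives $\hat{y} := \M(y_0, \Ctilde_u) \in \Lip(\clint{0,T};\H)$, while $\utilde \in \Lip(\clint{0,T};\H)$ by Proposition \ref{P:vntilde -> vtilde}. Hence $\what$ is Lipschitz and differentiable $\leb^1$-almost everywhere, and I would take $\hat{v}_w \in \L^{\infty}(0,T;\H)$ to be any Lebesgue representative of $\what'$; this settles (a).

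To prove (b) I first note the key identity $\Ctilde_u(\sigma) = \utilde(\sigma) - \Z$ for every $\sigma \in \ell_u(\cont(u))$. Indeed, if $t \in \cont(u)$ and $\sigma = \ell_u(t) \in \ell_u(\clint{0,T})$, the uniqueness part of Proposition \ref{P:reparametrization}(i) applied to both $u = \utilde \circ \ell_u$ and $\C_u = \Ctilde_u \circ \ell_{\C_u}$ (recall $\ell_u = \ell_{\C_u}$) yields $\utilde(\sigma) = u(t)$ and $\Ctilde_u(\sigma) = u(t) - \Z$. Setting $\hat{x} := \utilde - \hat{y}$, so that $\hat{x}(\sigma) \in \Z$ for every $\sigma \in \clint{0,T}$, the differential inclusion \eqref{diff. incl. - Lip} satisfied by $\hat{y}$ can be rewritten at $\leb^1$-a.e.\ $\sigma \in \ell_u(\cont(u))$ as $\hat{y}'(\sigma) \in N_\Z(\hat{x}(\sigma))$ via a direct manipulation of \eqref{normal cone}. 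Dividing the inequality $\duality{\hat{y}'(\sigma)}{\hat{x}(\sigma+h) - \hat{x}(\sigma)} \le 0$ by $h > 0$ and by $h < 0$ and passing to the limit then yields the orthogonality $\duality{\hat{y}'(\sigma)}{\hat{x}'(\sigma)} = 0$. Since $\what = \hat{y} - \hat{x}$ and $\utilde = \hat{x} + \hat{y}$, expanding $\norm{\what'(\sigma)}{}^2$ and $\norm{\utilde'(\sigma)}{}^2$ gives $\norm{\what'(\sigma)}{} = \norm{\utilde'(\sigma)}{}$, and \eqref{|utilde'| costante} then yields (b).

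For (c) I would use the preceding lemma to identify $w = \Q(z_0,u) = \what \circ \ell_u$ and apply the BV chain rule (Proposition \ref{P:BV chain rule}) with $h = \ell_u$ (nondecreasing and right-continuous because $u$ is) and $f = \what$ (Lipschitz); since $\discont(\ell_u) = \discont(u)$ and $\ell_u(t+) = \ell_u(t)$ for every $t$, Proposition \ref{P:BV chain rule}(ii)---with $\what'$ replaced by its Lebesgue representative $\hat{v}_w$, as permitted by the last sentence of that statement---delivers $\D w = g_w \D \ell_u$ with $g_w$ exactly as in \eqref{density of what w.r.t. d ell}. The main difficulty is the orthogonality step in (b): it hinges on the identity $\Ctilde_u = \utilde - \Z$ on $\ell_u(\cont(u))$, which reduces the sweeping process there to a play-type inclusion driven by the constant-shape set $\Z$, and then on the fact that $\hat{x}$ stays in $\Z$ globally so that the classical normal-versus-tangent argument applies at every common differentiability point of $\hat{x}$ and $\utilde$ in $\ell_u(\cont(u))$ (a set which may well have Lebesgue measure zero, in which case the statement is vacuous).
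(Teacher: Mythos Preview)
Your argument for (a) and (c) is fine, but there is a genuine gap in (b): the claim that $\hat{x}(\sigma) = \utilde(\sigma) - \hat{y}(\sigma) \in \Z$ for \emph{every} $\sigma \in \clint{0,T}$ is false. The identity $\Ctilde_u(\sigma) = \utilde(\sigma) - \Z$ that you correctly establish holds only on $\ell_u(\clint{0,T})$; on the jump intervals $\opint{\ell_u(t-),\ell_u(t)}$ the reparametrized moving set is the geodesic interpolant \eqref{reparametrization2-Conv}, which is \emph{not} of the form $\utilde(\sigma) - \Z$, and the constraint $\hat{y}(\sigma) \in \Ctilde_u(\sigma)$ does not force $\hat{x}(\sigma) \in \Z$ there. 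A concrete instance: take $\H = \ar^2$, $\Z$ the closed unit ball, $u$ with a single jump from $(0,0)$ to $(3,0)$, and $z_0 = (0,-1)$. Then $\hat{y}$ starts at $(0,1)$ and, by Lemma \ref{L:particular sweeping process}, stays at $(0,1)$ for small $\lambda$, while $\utilde(\lambda) = (3\lambda,0)$; at $\lambda = 0.1$ one gets $\hat{x} = (0.3,-1)$, whose norm exceeds $1$.

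Because of this, the inequality $\duality{\hat{y}'(\sigma)}{\hat{x}(\sigma+h) - \hat{x}(\sigma)} \le 0$ is available only when $\sigma + h \in \ell_u(\clint{0,T})$, and you cannot simply let $h \to 0^\pm$. The paper handles this by restricting to the set $C$ of differentiability points $\sigma \in \ell_u(\cont(u))$ that can be approached from both sides by sequences in $\ell_u(\clint{0,T})$; for those $\sigma$ your difference-quotient argument goes through along such sequences. The remaining set $D := (A \cap \ell_u(\cont(u))) \setmeno C$ consists of endpoints of the countably many jump intervals $\opint{\ell_u(t-),\ell_u(t)}$, hence is countable, so $\leb^1(D) = 0$ and (b) follows. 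This topological bookkeeping is the missing piece; once you add it, your plan matches the paper's proof.
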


\begin{proof}
If $\yhat := \M(y_0, \Ctilde_u)$ then
\begin{alignat}{3}
  & \yhat(\sigma) \in \Ctilde_u(\sigma) & \qquad & \forall \sigma \in \clint{0,T}, \label{y in C - tilde} \\
  & \yhat'(\sigma) + \partial I_{\Ctilde_u(\sigma)}(y(\sigma)) \ni 0 & \qquad & 
     	\text{for $\leb^1$-a.e. $\sigma \in \clint{0,T}$}, 
     \label{diff. incl. C - tilde} \\
  & \yhat(0) = u(0) - x_0 \label{in. cond. C - tilde}
\end{alignat}
and, since it is immediately seen that
\begin{equation}\label{Ctilde on im(ell_u)}
  \Ctilde_u(\sigma) = \utilde(\sigma) - \Z \qquad \forall \sigma \in \ell_u(\clint{0,T}),
\end{equation}
it follows from \eqref{diff. incl. C - tilde} that
\begin{equation}\label{play part in Sw(Ctilde)}
  \duality{\yhat'(\sigma)}{z - \utilde(\sigma) + \yhat(\sigma)} \le 0 \qquad 
  \text{for $\leb^1$-a.e. $\sigma \in \ell_u(\clint{0,T})$}
\end{equation}
(the case $\leb^1(\ell_u(\clint{0,T})) = 0$ is not excluded). Let $A$ be the set where $\what$ is differentiable, hence 
$\leb^1(\clint{0,T} \setmeno A) = 0$, and observe that \eqref{solution for catching-up geodesic-2} and \eqref{def what} imply that $\what$ is affine on every interval of the form $\opint{\ell_u(t-), \ell_u(t)}$ with $t \in \discont(u)$, thus 
$B := \bigcup_{t \in \discont(u)} \opint{\ell_u(t-), \ell_u(t)} \subseteq A$. Now define $C$ as the set of points 
$\sigma \in A \cap \ell_u(\cont(u))$ such that there are two sequences $h_n, k_n \in \ar$ such that $h_n \searrow 0$ and $k_n \searrow 0$ as $n \to \infty$ and $\sigma + h_n \in \ell_u(\clint{0,T})$ and $\sigma - k_n \in \ell_u(\clint{0,T})$ for every $n \in \en$. Let us notice that $C \cap B = \void$ and take $z = \xhat(\sigma + h_n)$  (respectively 
$z = \xhat(\sigma - h_n)$) in \eqref{play part in Sw(Ctilde)}, divide by $h_n$ (resp. by $k_n$), and take the limit as 
$n \to \infty$: as a result we get $\duality{\yhat'(\sigma)}{\utilde'(\sigma) - \yhat'(\sigma)} = 0$. Therefore for every 
$\sigma \in C$ we have
\begin{align}
 \norm{\what'(\sigma)}{}^2 
   & = \norm{\yhat'(\sigma) - (\utilde'(\sigma) - \yhat'(\sigma))}{}^2 
      = \norm{\yhat'(\sigma)}{}^2 + \norm{\utilde'(\sigma) - \yhat'(\sigma)}{}^2 \notag \\
   & = \norm{\yhat'(\sigma) + (\utilde'(\sigma) - \yhat'(\sigma))}{}^2  = \norm{\utilde'(\sigma)}{}^2, \notag        
\end{align}
i.e.
\begin{equation}
  \norm{\what'(\sigma)}{} = \norm{\utilde'(\sigma)}{}
  \qquad \forall \sigma \in C.
\end{equation}
Now let $\sigma \in D:= \left(A \cap \ell_u(\cont(u))\right) \setmeno C$. From \eqref{ell(0,T)=} it follows that $\sigma$ is the endpoint of an interval of the kind $\opint{\ell_u(t-),\ell_u(t)}$ with $t \in \discont(u)$, thus at most two possibilities can occur:
\begin{itemize}
\item[(a)] $\sigma \in \ell_u(\opint{t-\delta,t})$ with $t \in \discont(u)$, $\delta > 0$ and $\ell_u(s) = \ell_u(t-)$ 
             for every 
               $s \in \opint{t-\delta,t}$: therefore $\D \ell_u(\ell_u^{-1}(\sigma)) = 0$;
\item[(b)] $\sigma = \ell_u(t)$ with $t \in \discont(u)$ and $\ell_u^{-1}(\sigma) = \clsxint{t,s}$ with $s \in \discont(u)$ and
               $\ell_u$ constant on $\clsxint{t,s}$: therefore $\D \ell_u(\ell_u^{-1}(\sigma)) = 0$.
\end{itemize}
It follows that, since $(\opint{\ell_u(t-),\ell_u(t)})_{t \in \discont(u)}$ is a countable family, $\leb^1(D)= 0$ and 
$\D \ell_u(\ell_u^{-1}(D))$ $=$ $0$. Therefore if $e \in \H$ is such that $\norm{e}{} = 1$ (if $\H = \{0\}$ there is nothing to prove), then the function $\hat{v}_w : \clint{0,T} \function \H$ defined by
\begin{equation}
  \hat{v}_w(\sigma) :=
  \begin{cases}
    \what'(\sigma) & \text{if $\sigma \in B \cup C$}, \\
    \ \\
    \dfrac{\V(\utilde,\clint{0,T})}{T}e & \text{otherwise}
  \end{cases}
\end{equation}
satisfies the required properties. Now the last statement on $g_{w}$ follows from Proposition \ref{P:BV chain rule} and 
from the fact that $\D \ell_u(\ell_u^{-1}(D)) = 0$.
\end{proof}


\section{Proof of the main Theorem}\label{S:proof main thm}

In this section we prove the main Thereom \ref{T:main thm}. First, for the reader's convenience we restate the weak compactness theorem for measures \cite[Theorem 5, p. 105]{{DieUhl77}} in a form which is suitable to our purposes.
 
\begin{Thm}\label{measure dunford pettis}
Let $I \subseteq \ar$ be an interval and let $M$ be a subset of the vector space of measures $\mu : \borel(I) \function \H$ with bounded variation endowed with the norm $\norm{\mu}{} := \vartot{\mu}(I)$. Assume that $M$ is bounded. Then $M$ is weakly sequentially precompact if and only if  there exists a bounded positive measure 
$\nu : \borel(I) \function \clsxint{0,\infty}$ such that for every $\eps > 0$ there is a $\delta > 0$ which satisfies the implication
\begin{equation}
  \forall \eps > 0 \ \exists \delta> 0 \ \quad :\quad 
  \left( B \in \borel(I),\ \nu(B) < \delta\  \Longrightarrow \ \sup_{\mu \in M} \vartot{\mu}(B) < \eps \right).
\end{equation}
\end{Thm}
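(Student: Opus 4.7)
The plan is to identify the space of $\H$-valued vector measures of bounded variation that are absolutely continuous with respect to a fixed positive bounded Borel measure $\nu$ with the Bochner space $\L^1(\nu;\H)$ via the Radon--Nikodym correspondence, and then invoke the classical Dunford--Pettis theorem characterizing relatively weakly sequentially compact subsets of $\L^1$. Since $\H$ is reflexive it has the Radon--Nikodym property, so whenever $\mu \ll \nu$ there exists a unique $f_\mu \in \L^1(\nu;\H)$ with $\mu = f_\mu \nu$, and moreover $\vartot{\mu}(B) = \int_B \norm{f_\mu(t)}{} \de \nu(t)$ for every $B \in \borel(I)$.

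For the ``if'' direction I would argue as follows. Take $\eps_n \searrow 0$ in the given implication; the corresponding $\delta_n$ show that every $\mu \in M$ is $\nu$-absolutely continuous, so $\mu = f_\mu \nu$ with $f_\mu \in \L^1(\nu;\H)$. The uniform absolute continuity assumption on $\{\vartot{\mu}\ :\ \mu \in M\}$ with respect to $\nu$ then says exactly that the family $F := \{f_\mu\ :\ \mu \in M\}$ is uniformly $\nu$-integrable in $\L^1(\nu;\H)$; boundedness of $M$ in total variation norm corresponds to boundedness of $F$ in $\L^1$-norm. The vector-valued Dunford--Pettis theorem (applicable because $\H$ is reflexive) then yields that $F$ is relatively weakly sequentially compact in $\L^1(\nu;\H)$. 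Since the map $f \longmapsto f\nu$ is a linear isometry from $\L^1(\nu;\H)$ into the space of $\H$-valued measures of bounded variation endowed with $\norm{\cdot}{}$, weak sequential precompactness transfers to $M$.

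For the ``only if'' direction, assuming $M$ is weakly sequentially precompact, I would first construct a candidate control measure. Selecting a sequence $(\mu_n) \subseteq M$ whose weak closure exhausts that of $M$ (possible since weakly sequentially compact sets in Banach spaces are contained in separable subspaces), set $\nu := \sum_{n \ge 1} 2^{-n} \vartot{\mu_n}/(\norm{\mu_n}{}+1)$; this is a bounded positive Borel measure. Then one verifies uniform $\nu$-absolute continuity by contradiction: if it failed, one could extract $\mu_k \in M$ and $B_k \in \borel(I)$ with $\nu(B_k) \to 0$ but $\vartot{\mu_k}(B_k) \ge \eps_0 > 0$; passing to a weakly convergent subsequence $\mu_k \convergedeb \mu_\infty$ and applying Rosenthal's disjointification lemma produces almost-disjoint Borel sets on which $\vartot{\mu_k}$ remains bounded below, contradicting the fact that for a weakly convergent sequence of vector measures the variations cannot concentrate on vanishing $\nu$-sets. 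The main obstacle here is precisely that the functional $\mu \longmapsto \vartot{\mu}$ is sublinear but neither linear nor weakly continuous, so information about weak compactness of $M$ cannot be transported directly to $\{\vartot{\mu}\ :\ \mu \in M\}$; Rosenthal's lemma (in the Bartle--Dunford--Schwartz tradition of producing a uniform control measure for a weakly compact family) is the indispensable technical tool that closes this gap.
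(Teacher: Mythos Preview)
The paper does not give a self-contained proof of this theorem; it cites \cite[Theorem 5, p.~105]{DieUhl77} and remarks that the topological precompactness there is in fact sequential because an isometric isomorphism reduces the problem to the Dunford--Pettis weak sequential precompactness theorem in $\L^1(\nu;\H)$. Your proposal is precisely this reduction spelled out (Radon--Nikodym via reflexivity of $\H$, then $\L^1$ Dunford--Pettis), so it matches the paper's indicated approach.
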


Theorem \ref{measure dunford pettis} is stated in \cite[Theorem 5, p. 105]{DieUhl77} as a topological precompactness result. An inspection in the proof easily shows that this is actually a sequential precompatness theorem, since an isometric isomorphism reduces it to the well-known Dunford-Pettis weak sequential precompactness theorem in 
$\L^1(\nu;\H)$ (see, e.g., \cite[Theorem 1, p. 101]{DieUhl77}).

The following lemma is a vector measure counterpart of a well-known weak derivative argument.

\begin{Lem}\label{Dwn -> Dw}
Let $I \subseteq \ar$ be an interval, $w, w_n \in \BV(I;\H)$ for every $n \in \en$, and $\mu : \borel(I) \function \H$ be a measure with bounded variation. If $w_n \to w$ uniformly on $I$ and $\D w_n \convergedeb \mu$, then $\D w = \mu$.
\end{Lem}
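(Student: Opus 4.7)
The plan is to test $\D w$ and $\mu$ against a rich family of scalar test functions and conclude equality of measures from the resulting identities. I will use $\varphi \in \Czero_c^1(\ar;\ar)$, on which uniform convergence and the distributional-derivative formula handle one side of the identity, while the weak convergence of vector measures handles the other.

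Fix $\varphi \in \Czero_c^1(\ar;\ar)$ and extend $w_n, w$ to $\ar$ as in \eqref{extension to R}. The hypothesis $w_n \to w$ uniformly on $I$ transfers to $\overline{w}_n \to \overline{w}$ uniformly on $\ar$, since each extension is constant off $I$ with constants that are themselves values of the original functions on $I$. Because $\D \overline{w}_n = \D w_n$ and $\D \overline{w} = \D w$ are concentrated on $I$, the definition of distributional derivative gives
\[
  \int_I \varphi \, \de \D w_n \;=\; \int_\ar \varphi \, \de \D \overline{w}_n \;=\; -\int_\ar \varphi'(t)\, \overline{w}_n(t) \, \de t,
\]
and the right-hand side converges to $-\int_\ar \varphi'(t)\, \overline{w}(t) \, \de t = \int_I \varphi \, \de \D w$ by dominated convergence, using that $\varphi'$ is bounded with compact support. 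On the other side, the estimate $|\int_I \varphi \, \de \nu| \le \norm{\varphi}{\infty} \vartot{\nu}(I)$ shows that $F_\varphi(\nu) := \int_I \varphi \, \de \nu$ is a bounded linear functional on the Banach space of bounded-variation $\H$-valued measures; hence the hypothesis $\D w_n \convergedeb \mu$ yields $\int_I \varphi \, \de \D w_n \to \int_I \varphi \, \de \mu$. Comparing the two limits, $\int_I \varphi \, \de \mu = \int_I \varphi \, \de \D w$ for every $\varphi \in \Czero_c^1(\ar;\ar)$.

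To upgrade this to $\mu = \D w$, I approximate the indicator $\indicator_{\clint{s,t}}$ of any closed subinterval $\clint{s,t} \subseteq \Cl(I)$ by a uniformly bounded sequence $\varphi_k \in \Czero_c^1(\ar;\ar)$ converging pointwise to $\indicator_{\clint{s,t}}$; dominated convergence against the finite positive measure $\vartot{\mu} + \vartot{\D w}$ then yields $\mu(\clint{s,t} \cap I) = \D w(\clint{s,t} \cap I)$, which suffices to identify the two bounded $\H$-valued Borel measures on $I$. The only delicate point in this plan is recognising that $F_\varphi$ is continuous for the weak topology in which $\D w_n \convergedeb \mu$ is assumed; this is automatic once one notes that bounded Borel scalar functions act continuously on the Banach space of bounded-variation vector measures, the same observation that underlies the isometric isomorphism used in Theorem~\ref{measure dunford pettis}. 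Everything else is a routine combination of the distributional-derivative formula with uniform convergence.
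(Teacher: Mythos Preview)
Your proof is correct and follows essentially the same route as the paper's: test against $\varphi \in \Czero_c^1(\ar;\ar)$, use the distributional-derivative formula together with uniform convergence on one side and weak convergence of the measures on the other, then identify $\mu$ with $\D w$ from the resulting equalities. One small precision: your $F_\varphi$ takes values in $\H$, so it is a bounded linear \emph{operator} rather than a functional, and weak convergence $\D w_n \convergedeb \mu$ only yields $F_\varphi(\D w_n) \convergedeb F_\varphi(\mu)$ weakly in $\H$; the paper handles this by pairing with an arbitrary $x \in \H$ first (equivalently, bounded operators are weak-to-weak continuous), after which your comparison with the norm limit $\int_I \varphi \,\de \D w$ goes through unchanged.
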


\begin{proof}
Let $\overline{w}$ and $\overline{w}_n$ be the extensions  of $w$ and $w_n$ to $\ar$ defined as in 
\eqref{extension to R}. We have that $\overline{w}_n \to \overline{w}$ uniformly on $\ar$ and $\D \overline{w}$ and 
$\D \overline{w}_n$ are Borel measures of bounded variation on $\ar$, concentrated on $I$. We also extend $\mu$ to the measure $\overline{\mu} : \borel(\ar) \function \H$ defined by $\overline{\mu}(B) := \mu(B \cap I)$, $B \in \borel(\ar)$, thus we have $\D\overline{w}_n \convergedeb \overline{\mu}$. Let $x \in \H$ and $\varphi \in \Czero^1_c(\ar;\ar)$. Then the mapping $\nu \function \duality{x}{\int_\ar \varphi(t) \de \nu}$ is a linear continuous functional on the space of Borel measures with bounded variation on $\ar$, therefore we have
\begin{equation}
  \lim_{n \to \infty} \lduality{x}{\int_{\ar} \varphi \de\D \overline{w}_n} = \lduality{x}{\int_{\ar} \varphi \de\overline{\mu}} \notag
\end{equation}
On the other hand we have
\begin{align}
  \lim_{n \to \infty} \lduality{x}{\int_{\ar} \varphi \de\D \overline{w}_n} 
   =  \lim_{n \to \infty} \lduality{x}{-\int_{\ar} \varphi'(t) \overline{w}_n(t) \de t}  
   = \lduality{x}{-\int_{\ar} \varphi'(t) \overline{w}(t) \de t} \notag
\end{align}
hence
\[
  \lduality{x}{\int_\ar \varphi \de\overline{\mu}} = \lduality{x}{-\int_{\ar} \varphi'(t) \overline{w}(t)\de t}
\]
and from the arbitrariness of $x$ it follows that 
\[
 \int_{\ar} \varphi \de\overline{\mu} = -\int_{\ar} \varphi'(t) \overline{w}(t)\de t = \int_{\ar} \varphi \de\D\overline{w},
\]
thus $\overline{\mu} = \D \overline{w}$ by the arbitrariness of $\varphi$. Hence $\mu = \D w$.
\end{proof}

We are now in position to provide the

\begin{proof}[Proof of Theorem \ref{T:main thm}]
Assume that $z_0, z_{0,n} \in \Z$, $u, u_n \in \BV^\r(\clint{0,T};\H)$ for every $n \in \en$ and that $z_{0,n} \to z_0$ and 
$\norm{u_n - u}{\BV} \to 0$ as $n \to \infty$. Let us set $y_{0} := u(0) - z_0$, $y_{0,n} := u_n(0) - z_{0,n}$ for every 
$n \in \en$. For simplicity we define $\C, \C_n \in \BV^\r(\clint{0,T};\Conv_\H)$ by $\C(t) := \C_u(t) = u(t) - \Z$, 
$\C_n(t) := \C_{u_n}(t) = u_n(t) - \Z$, $t \in \clint{0,T}$, and we set $\ell := \ell_u = \ell_\C$, 
$\ell_n := \ell_{u_n} = \ell_{\C_n}$ (cf. \eqref{ell u = ell Cu}) for every $n \in \en$. Hence Theorem 
\ref{T:Lip-reduction of BVsweep} yields
\begin{equation}\label{P(u) = ..., P(u n) = ...}
  \P(z_0, u) = \M(y_0, \Ctilde) \circ \ell, \quad \P(z_{0,n}, u_n) = \M(y_{0,n},\Ctilde_n) \circ \ell_n 
  \qquad \forall n \in \en.
\end{equation} 
We also define 
\begin{equation}\label{Q(u) = ..., Q(u n) = ...}
  w := \Q(z_0, u) = 2\P(z_0, u) - u, \qquad w_n := \Q(z_{0,n}, u_n) = 2\P(z_{0,n}, u_n) - u_n,
\end{equation} 
and 
\begin{equation}\label{def what n}
  \what := 2\M(y_0, \Ctilde) - \utilde, \qquad \what_n := 2\M(y_{0,n}, \Ctilde_{n}) - \utilde_n.
\end{equation} 
Now, with these notations, let $g_w \in \L^{\infty}(0,T;\H)$ and $g_{w_n} \in \L^{\infty}(0,T;\H)$ be the density functions provided by Lemma \ref{|what'| = cost} in formula \eqref{density of what w.r.t. d ell}, with $w$ replaced by $w_n$ in the case of $g_{w_n}$, and for simplicity set $g := g_w$, $g_n := g_{w_n}$. Therefore we have that
\begin{equation}
  \D w = g \D \ell, \qquad \D w_n = g_n \D\ell_n.
\end{equation}
We will prove that $\norm{w_n - w}{\BV} \to 0$ as $n \to \infty$, and the conclusion follows from 
\eqref{Q(u) = ..., Q(u n) = ...} and from the linearity of the $\BV$-norm topology. From \eqref{P(u) = ..., P(u n) = ...},
\eqref{Q(u) = ..., Q(u n) = ...}, the uniform convergence of $u_n$ to $u$, Corollary \ref{C-u n -> C-u}, and from the continuity property of $\M$ stated in Theorem \ref{T:existence general BVsweep}, we infer that
\begin{equation}
  \norm{w_n - w}{\infty} \to 0 \qquad \text{as $n \to \infty$}. \label{wn -> w unif}
\end{equation}
Moreover from the inequality $|\V(u_n,\clint{s,t}) - \V(u,\clint{s,t})| \le \V(u_n - u, \clint{s,t})$, $0 \le s \le t \le T$, and from the triangle inequality we immediately get that
\begin{equation}
 \vartot{\D\ \!(\ell_n - \ell)}(\clint{0,T}) = \V(\ell_n - \ell, \clint{0,T}) \to 0 \qquad \text{as $n \to \infty$}.
\end{equation}
Thanks to  \eqref{density of what w.r.t. d ell}, \eqref{def what n}, \eqref{estimate Lip sweep proc}, 
\eqref{Lip(ftilde) = Lip(F)}, and \eqref{ell u = ell Cu}, we have that for every $t \in \discont(u)$ and for every $n \in \en$  
\begin{align}
 \norm{g_{n}(t)}{}
    & \le \Lipcost(\what_n) \le 2\Lipcost(\M(y_0,\Ctilde_n)) + \Lipcost(\utilde) \notag \\
    & \le 2\Lipcost(\Ctilde_n) +  \Lipcost(\utilde)  \le 2\V(\C_n,\clint{0,T})/T + \V(u_n,\clint{0,T})/T \notag \\
    & = 3\V(u_n,\clint{0,T})/T, \notag
\end{align}
while from \eqref{density of what w.r.t. d ell} and \eqref{norm of what'} we infer that 
$\norm{g_{n}(t)}{} \le \V(u_n,\clint{0,T})$ for every $t \in \cont(u)$ and for every $n$. Hence there is a constant $C > 0$, independent of $n$, such that
\begin{equation}\label{bound for g wn}
  \norm{g_{n}(t)}{} \le C \qquad \forall n \in \en,
\end{equation}
and
\begin{equation}
  \vartot{\D w_n}(B) = \int_B \norm{g_{n}(t)}{} \de \D\ell(t) \le C \vartot{\D \ell_n}(B) \qquad
  \forall B \in \borel(\clint{0,T}).
\end{equation}
Therefore, since in particular $\D\ell_n$ is weakly convergent to $\D \ell$, by the weak sequential compactness 
Dunford-Pettis Theorem \ref{measure dunford pettis} for vector measures, by \eqref{wn -> w unif}, and by Lemma 
\ref{Dwn -> Dw}, we have that $\D w_n$ is weakly convergent to $\D w$, in particular if $\phi : \clint{0,T} \function \H$ is an arbitrary bounded Borel function then $\mu \longmapsto \int_{\clint{0,T}} \duality{\phi(t)}{\de \mu(t)}$ is a continuous linear functional on the space of measures with bounded variation and we have
\[
  \lim_{n \to \infty} \int_{\clint{0,T}} \duality{\phi(t)}{\de \D w_n(t)} = \int_{\clint{0,T}} \duality{\phi(t)}{\de \D w(t)},
\]
i.e.
\begin{equation}\label{gn Dwn -> gDw}
  \lim_{n \to \infty} \int_{\clint{0,T}} \duality{\phi(t)}{g_{n}(t)} \de \D \ell_n(t) = 
  \int_{\clint{0,T}} \duality{\phi(t)}{g(t)}\de \D \ell(t).
\end{equation}
On the other hand, by \eqref{bound for g wn}, we have that there exists $z \in \L^{p}(\D\ell;\H)$ such that 
$g_{n} \convergedeb z$ in $\L^p(\D\ell;\H)$ for every $p \in \opint{1,\infty}$, therefore if we set 
$\psi_n(t) := \duality{\phi(t)}{g_{n}(t)}$ and $\psi(t) := \duality{\phi(t)}{z(t)}$ for $t \in \clint{0,T}$, we have that 
$\psi_n \convergedeb \psi$ in $\L^p(\D\ell;\ar)$, $p \in \opint{1,\infty}$, thus
\begin{align}
   & \sp \left| \int_{\clint{0,T}} \psi_n(t) \de \D \ell_n(t) -  \int_{\clint{0,T}} \psi(t) \de \D \ell(t) \right| \notag \\
   &  \le  \int_{\clint{0,T}} |\psi_n(t)| \de \vartot{\D\ \!(\ell_n - \ell)}(t) +
       \left| \int_{\clint{0,T}} (\psi_n(t) - \psi(t)) \de \D \ell(t) \right| \notag \\
   & \le  \norm{\phi}{\infty}\norm{g_{n}}{\infty} \vartot{\D\ \!(\ell_n - \ell)}(\clint{0,T}) +
       \left| \int_{\clint{0,T}} (\psi_n(t) - \psi(t)) \de \D \ell(t) \right| \notag \\
   &  \le  C \norm{\phi}{\infty}\norm{u_n - u}{\BV} +
       \left| \int_{\clint{0,T}} (\psi_n(t) - \psi(t)) \de \D \ell(t) \right| \to 0 \notag
\end{align}
as $n \to \infty$. This means that
\[
  \lim_{n \to \infty} \int_{\clint{0,T}} \duality{\phi(t)}{g_{n}(t)} \de \D \ell_n(t)\ = 
  \int_{\clint{0,T}} \duality{\phi(t)}{z(t)}\de \D \ell(t),
\]
hence, by \eqref{gn Dwn -> gDw},
\begin{equation}\label{g dl = z dl weakly}
  \int_{\clint{0,T}} \duality{\phi(t)}{\de(g\D \ell)(t)} = 
  \int_{\clint{0,T}} \duality{\phi(t)}{\de (z\D \ell)(t)}.
\end{equation}
The arbitrariness of $\phi$ and \eqref{g dl = z dl weakly} imply that $z\D \ell = g\D \ell$ (cf. 
\cite[Proposition 35, p. 326]{Din67}), hence $z(t) = g(t)$ for $\D\ell$-a.e. $t \in \clint{0,T}$ and we have found that
\begin{equation}
  g_{n} \convergedeb g \qquad \text{in $\L^p(\D\ell;\H)$, \quad $\forall p \in \cldxint{1,\infty}$}. 
\end{equation}
Now observe that \eqref{density of what w.r.t. d ell} and \eqref{norm of what'} yield
\begin{equation}\label{|g_n| -> |g| pointwise on cont(u)}
  \lim_{n \to \infty} \norm{g_n(t)}{} = \lim_{n \to \infty} \frac{V(u_n,\clint{0,T})}{T} = \frac{V(u,\clint{0,T})}{T}
  = \norm{g(t)}{} \qquad \forall t \in \cont(u).
\end{equation}
Moreover $\ell_{n} \to \ell$ uniformly by \cite[Proposition 5.2]{Rec16}, while formula \eqref{def what n}, Corollary 
\ref{C-u n -> C-u}, and Proposition \ref{P:vntilde -> vtilde} imply that $\norm{\what_n - \what}{\infty} \to 0$ as $n \to\infty$, thus
\begin{equation}\label{|g_n| -> |g| pointwise on discont(u)}
  \lim_{n \to \infty} \left\|\frac{ \what_n(\ell_n(t)) - \what_n(\ell_n(t-)) }{\ell_n(t) - \ell_n(t-)}\right\| =
  \left\|\frac{ \what(\ell(t)) - \what(\ell(t-)) }{\ell(t) - \ell(t-)}\right\| \qquad \forall t \in \discont(u).
\end{equation}
From \eqref{|g_n| -> |g| pointwise on cont(u)}, \eqref{|g_n| -> |g| pointwise on discont(u)}, and \eqref{bound for g wn}
it follows that 
\begin{align}
\lim_{n \to \infty} \norm{g_{n}}{\L^p(\D\ell;\H)}^p
    & = \lim_{n \to \infty} \int_{\clint{0,T}} \norm{g_{n}(t)}{}^p \de \D\ell(t) \notag \\
    & = \int_{\clint{0,T}} \norm{g(t)}{}^p \de \D\ell(t) = \norm{g}{\L^p(\D\ell;\H)}^p \qquad \forall p \in \opint{1,\infty}, \notag 
\end{align}
therefore by the uniform convexity of $\L^p(\D\ell;\H)$ for $p \in \opint{1,\infty}$ we have
\begin{equation}
 g_{n}  \to g \qquad \text{in $\L^p(\D\ell;\H)$ $\forall p \in \opint{1,\infty}$},
\end{equation}
and, since $\D\ell(\clint{0,T}) = T < \infty$,
\begin{equation}
  g_{n} \to g \qquad \text{in $\L^1(\D\ell;\H)$}.
\end{equation}
Hence $g_n$ has a subsequence, which we do not relabel, that is convergent to $g$ for $\D\ell$-a.e. $t$, thus
\begin{align}
 V(w_n - w, [0,T])=  \norm{\D\ \!(w_n - w)}{}
    & =  \norm{\D w_n - \D w}{} =
            \norm{g_{n} \D \ell_n - g \D \ell}{} \notag \\
    & \le \norm{g_{n}\D\ \!(\ell_n - \ell)}{} + \norm{(g_{n} -g) \D\ell}{} \notag \\
    & \le C \norm{\D\ \!(\ell_n - \ell)}{} + 
            \int_{\clint{0,T}}\norm{g_{n}(t) - g(t)}{} \de \D\ell(t) \to 0\notag
\end{align}
as $n \to \infty$ and we are done.
\end{proof}



\end{document}